\newtheorem{Theorem}{Theorem}
\newtheorem{Proposition}{Proposition}
\newtheorem{Lemma}{Lemma}
\newtheorem{Corollary}{Corollary}
\theoremstyle{definition}
\newtheorem{definition}{Definition}
\newtheorem{remark}{Remark}
\newtheorem{eg}{Example}
\newtheorem{qu}{Question}
\def\ZZ{{\mathbb Z}}
\def\NN{{\mathbb N}}
\def\S{{\boldsymbol{S}}}
\begin{document}

\title{Characterising knots by their branched coverings}

\author{Luisa Paoluzzi}
\date{November 2025}
\maketitle

\begin{center}
Workshop \emph{Branched Covers}
\\
IMUS -- Universidad de Sevilla
\end{center}

\begin{abstract}
\vskip 2mm
These are the notes for a mini-course on the problem of determining knots by 
means of their cyclic branched coverings given at the \emph{Workshop on
Branched Covers} held at IMUS in Seville from November 17 to 21, 2025.

\vskip 2mm
\noindent\emph{MSC2020: } Primary 57M12; Secondary 57K10; 57K32; 57K35.

\vskip 2mm

\noindent\emph{Keywords:} knots, cyclic branched coverings

\end{abstract}

\section{Introduction}

Branched coverings are a convenient way to encode more complicated manifolds by
means of simpler ones (e.g. the sphere) together with a subset (the branching 
locus, e.g. a knot, link or graph) and some combinatorial data (ramification 
locus and monodromy). 

Recall that for $X$ and $Y$ topological spaces, if a map $p:X\longrightarrow Y$
is a branched covering then (i) $p$ is surjective and open, (ii) the subset
$R\subset X$ where $p$ is not a local homeomorphism is closed and nowhere
dense, and (iii) the map induced by $p$ from $X\setminus p^{-1}(p(R))$ to 
$Y\setminus p(R)$ is a genuine covering. With this notation $Y$ is the
\emph{base}, $X$ the \emph{total space}, $R$ the \emph{ramification locus}, and 
$B=p(R)$ the \emph{branching locus} of the covering. Depending on the context, 
one may require $B$ to satisfy extra ``regularity" conditions (for instance, a 
subcomplex of $Y$ of codimension $>1$ in the PL category, or a submanifold in 
the smooth category). 
 
Here, however, we are not interested in describing manifolds and will adopt a 
different point of view: we want to look at branched coverings as invariants 
for knots in the $3$-sphere. So the branching locus of the coverings we 
consider will be a knot\footnote{Other coverings will appear where the 
branching set is a link or a graph.} and the base of the covering the 
$3$-sphere. Moreover, our coverings will be finite so that their total spaces 
will be closed, connected, oriented $3$-manifolds. Other choices are of course 
possible, but we will focus on this setting. Before we proceed, though, it is 
necessary to make a few remarks in order to point out certain subtleties.
 
\subsection{Be aware of an abuse of language}

Branched coverings are maps. If $p:M\longrightarrow\S ^3$ is branched over a
knot $K$, then of course the covering is a complete invariant for $K$, as $K$ 
is part of the information encoded in the map: $K$ can be recovered as the 
image of the points where $p$ is not a covering in the usual, unbranched sense.
Such an invariant, however, does not have much interest as it is neither
simpler than $K$ itself, nor a totally different object that my shed some light
on, say, possible ways to classify knots. 

In the literature, by abuse of language, the expression \emph{branched covering 
of a knot} refers to the total space of the covering, that is $M$ for the
covering $p$ given above. In what follows we will also abuse language this way.

\subsection{Choose coverings appropriately}

In order to be advantageous, invariants should be easier to construct and
tell apart than the objects they should distinguish, or, at least, of a very
different nature, so that alternative tools and techniques can be exploited to
analyse them. For instance, in our situation, invariants are manifolds, i.e. of 
a topological or geometric nature, while the objects themselves, knots, 
exhibit a more combinatorial nature, due to their usual presentation in terms 
of diagrams, for instance.

But what coverings would be appropriate invariants? A possibility might be to
take the set of all of them (up to homeomorphism). However it is immediately 
clear that this option is definitely unbefitting: not only this set can be very
large, but the existence of infinitely many knots that are \emph{universal} 
shows that for infinitely many knots the set consists of all closed oriented 
$3$-manifolds. This follows from work of Hilden, Lozano, and Montesinos who 
proved that every two-bridge, non torus knot (actually link) has the property
\cite{HLM}. 

It is thus advisable to choose a much smaller family of coverings that are easy 
to construct. Better still if there is a way to compare different elements in
the family for distinct knots.

Keeping this in mind, (finite) regular coverings are among the simpliest to 
describe, for the base space is just the space of orbits of a properly
discontinous action of a group on a manifold and the branching locus is the
image of the set of points with non trivial stabilisers. Note also that 
branched coverings restrict to unbranched coverings of the complement of the 
knot, so that in order to construct them, one might start by considering 
(finite index) subgroups of the fundamental group of the knot or, in the case 
of regular ones when the subgroup is normal, (finite) quotients.

Even in this case, though, one might wonder what quotients of the fundamental
group of the knot are best suited. Since all knot groups have infinite
cyclic Abelianisation, for each integer $n\ge 2$ every knot $K$ admits a 
unique $n$-fold cyclic branched covering (see the next section for details
about how these are constructed). These are the coverings that we will consider
here.

\subsection{Banched coverings vs orbifolds}

Regular coverings have also another advantage. Since the base of the covering
is naturally endowed with an orbifold structure, most of the time the 
\emph{geometry} of the knot will be reflected in that of the total space. This
will be discussed in detail later. For now just note that the geometry of their 
exteriors (that is, hyperbolic, Seifert fibred or toroidal) provides a very 
rough first classification of knots. We end this discussion by recalling that
non-regular coverings in general cannot induce an orbifold structure on the 
base of the covering\footnote{A necessary condition for the base to have an
orbifold structure is that the preimage of the branched locus coincides with
the ramification locus.}. In the other direction not all orbifolds come from 
branched coverings, since bad orbifolds exist. Moreover, depending on the 
adopted definition of branched coverngs, the singular set of certain orbifolds 
cannot be the branched locus of a branched covering: this is for instance the 
case of reflection orbifolds, as the images of the reflection hyperplanes 
produce a topological ``boundary" in the base of the covering.    


\section{Construction of cyclic branched coverings of knots and basic 
properties}

Let now $K$ be a knot in the $3$-sphere and $n\ge 2$ be an integer. According
to the discussion in the previous section, the \emph{$n$-fold cyclic covering 
of $\S^3$ branched along $K$} is a closed, connected, orientable (even 
oriented) $3$-manifold $M(K,n)$ that can be described in basically two ways.

The first description exploits the fact that the branched covering restricts to 
a genuine cover from the complement of the ramification locus $\tilde K$ to the 
complement of the knot. $M(K,n)\setminus \tilde K$ is thus the total space of
the unique $n$-fold cyclic cover of $\S^3\setminus K$. Uniqueness comes from
the fact that $H_1(\S^3\setminus K)\cong\ZZ$ and there is a unique subgroup of
index $n$ in $\ZZ$. $M(K,n)$ is then simply the Dehn filling of such total
space (after a $\ZZ/n\ZZ$-equivariant open cusp is removed) with filling slope 
a lift of a meridian of $K$. As the covering is regular, the group $\ZZ/n\ZZ$ 
acts on $M(K,n)\setminus \tilde K$ with quotient $\S^3\setminus K$. The action 
extends to the solid torus of the filling as a standard rotation about its 
core, so that the core of such solid torus in $M(K,n)$ maps to $K$ in the 
quotient $\S^3$.

As a side remark, from this first description it is also clear that cyclic 
branched coverings of links can only be unique if $n=2$.

A different description can be found in Rolfen's book~\cite{R}. It has the 
advantage to offer a better visualisation of $M(K,n)$, but it cannot ensure 
uniqueness. One starts by cutting $\S^3$ along a Seifert surface for $K$. The 
resulting manifold has boundary consisting of two copies of the Seifert 
surface, say $S_-$ and $S_+$, that meet along $K$. It then suffices to take $n$ 
copies of such manifold and glue them together so that the $S_+$ in the $i$th 
copy is glued to the $S_-$ of the $i+1$st, with indices taken mod $n$. The
$\ZZ/n\ZZ$-action is now simply the cyclic permutation of the $n$ copies of the
manifold and the $\S^1$ which is the boundary of all the $S_-$ and $S_+$ is 
fixed by construction. This construction is particularly neat for fibred knots, 
as the covering inherits an open book decomposition with monodromy the 
$n$th-power of the monodromy for the knot. Unlike the fibred case, however, 
Seifert surfaces, even minimal ones, need not be unique (up to isotopy) so in 
principle one might end up with different manifolds.

The above discussion may be summarised in the following result which is a
characterisation of $n$-fold cyclic branched coverings for knots.

\begin{Proposition}
The $n$-fold cyclic branched covering of a knot $K$ in $\S^3$ is the
unique closed, connected, orientable $3$-manifold $M(K,n)$ admitting a periodic
diffeomorphism $\psi$ of order $n$ such that 
\begin{itemize}
\item ${\mathrm {Fix}}(\psi^k)\cong\S^1$ for all $k=1,\dots,n-1$, 
\item $M(K,n)/\langle\psi\rangle\cong \S^3$ and
\item $p({\mathrm {Fix}}(\psi))=K$, where $p:M(K,n)\longrightarrow \S^3$ is the 
quotient map.
\end{itemize}
\end{Proposition}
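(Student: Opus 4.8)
The plan is to prove the two directions separately: that $M(K,n)$, equipped with the deck transformation $\psi$ coming from its construction, satisfies the three listed properties (existence), and conversely that these properties force a manifold to be diffeomorphic to $M(K,n)$ (uniqueness). For existence I would take $\psi$ to be the generator of the $\ZZ/n\ZZ$-action from the first description. On $M(K,n)\setminus\tilde K$ it is a generator of the deck group of the cyclic cover, hence fixed-point free, while on the filling solid torus it is the standard rotation about the core $\tilde K$, which it fixes pointwise. Consequently, for $1\le k\le n-1$ the power $\psi^k$ is a nontrivial deck transformation away from $\tilde K$ (so has no fixed points there) and fixes $\tilde K$ pointwise, giving $\mathrm{Fix}(\psi^k)=\tilde K\cong\S^1$. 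The remaining two properties are immediate, since $M(K,n)/\langle\psi\rangle=\S^3$ and $p(\tilde K)=K$ by construction.

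For uniqueness, let $M$ admit such a $\psi$ and set $L=\mathrm{Fix}(\psi)$. First I would observe that $\psi$ acts freely on $M\setminus L$: since $\mathrm{Fix}(\psi)\subseteq\mathrm{Fix}(\psi^k)$ and both are circles, and a nonempty closed connected $1$-submanifold of a circle must be the whole circle, we get $\mathrm{Fix}(\psi^k)=L$ for every $k=1,\dots,n-1$; thus no nontrivial power fixes a point off $L$. Removing the codimension-$2$ submanifold $L$ leaves $M$ connected, so $M\setminus L\to(M\setminus L)/\langle\psi\rangle=\S^3\setminus K$ is a connected regular $\ZZ/n\ZZ$-cover. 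As every surjection $\pi_1(\S^3\setminus K)\twoheadrightarrow\ZZ/n\ZZ$ factors through $H_1\cong\ZZ$ with kernel $n\ZZ$, this cover is unique, so there is a $\ZZ/n\ZZ$-equivariant diffeomorphism $M\setminus L\cong M(K,n)\setminus\tilde K$ over $\S^3\setminus K$.

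It then remains to check that the fillings agree. By the equivariant tubular neighbourhood (slice) theorem for smooth finite group actions, a neighbourhood of $L$ is equivariantly diffeomorphic to $\S^1\times D^2$ on which $\psi$ acts as $(z,w)\mapsto(z,\zeta^a w)$, with $\zeta=e^{2\pi i/n}$ and $\gcd(a,n)=1$ (coprimality forced by freeness off $L$); the local quotient is then the branched cover $(z,w)\mapsto(z,w^n)$ onto a neighbourhood of $K$. Hence, writing $\mu$ for a meridian of $K$, the meridian $m$ of $L$ satisfies $p_*[m]=n[\mu]$, and a direct computation on the boundary torus (where $p_*$ is injective with image $\langle n\mu,\lambda\rangle$) shows $m$ is the unique primitive slope with this property. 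The same characterisation holds for $\tilde K$ in $M(K,n)$, so under the equivariant identification the two filling slopes coincide, and therefore $M\cong M(K,n)$ by a diffeomorphism compatible with $p$.

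The delicate point, which I expect to be the main obstacle, is precisely this passage from the complement to the closed manifold: one must show the filling slope is forced, and this rests on pinning down the action in a neighbourhood of the fixed circle. For a smooth periodic map this is exactly what the equivariant tubular neighbourhood theorem supplies, which is the reason the hypothesis requires a \emph{diffeomorphism} rather than a mere homeomorphism; in the purely topological category one would instead have to appeal to local smoothability results and, ultimately, to the circle of ideas around the Smith conjecture.
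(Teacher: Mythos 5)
Your proof is correct and follows essentially the same route as the paper, which presents the Proposition as a summary of its first construction: the unique $n$-fold cyclic cover of the knot complement, Dehn filled along the lift of a meridian, with the covering transformation extending as a rotation about the core of the filling torus. Your uniqueness argument (the fixed-point analysis forcing freeness off $L$, the equivariant tubular neighbourhood, and the characterisation of the filling slope via $p_*$) simply makes rigorous the details the paper's discussion leaves implicit.
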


\begin{qu}
The above discussion applies if $\S^3$ is replaced with an integral homolgy
sphere. One may ask whether the results that will be stated in what follows 
make sense and have a chance to extend to this more general
setting.\footnote{The positive solution to Smith's conjecture being a central
tool in many arguments, it seems likely that most results will not hold for
knots in homology spheres.}
\end{qu}

\begin{eg}
Let $n\ge 2$ be an integer and $K$ be the trivial knot. Then $M(K,n)\cong 
\S^3$.
\end{eg}

More interestingly one has the following result which is a consequence of the
positive solution to Smith's conjecture\footnote{The conjecture states that if
the fixed-point set of a finite order diffeomorphism of the $3$-sphere is a
circle, then it is the trivial knot, that is the diffeomorphism is conjugate to
a standard rotation of the same order.} \cite{MB} and the proposition above.

\begin{Theorem}\label{ThmSC}
Let $n\ge 2$ be an integer and $K$ a knot. Assume that $M(K,n)\cong \S^3$. Then 
$K$ is the trivial knot.
\end{Theorem}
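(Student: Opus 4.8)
The plan is to read off the conclusion almost directly from the Proposition together with Smith's conjecture, which between them do essentially all of the work.

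First I would invoke the characterisation of $M(K,n)$ provided by the Proposition. Since by hypothesis $M(K,n)\cong\S^3$, the Proposition supplies a periodic diffeomorphism $\psi$ of $\S^3$ of order $n$ whose fixed-point set $\mathrm{Fix}(\psi)$ is a circle (this is the case $k=1$), whose quotient $\S^3/\langle\psi\rangle$ is again $\S^3$, and for which $p(\mathrm{Fix}(\psi))=K$, where $p$ is the quotient map. The key observation to extract is that $K$ is precisely the image, under the quotient map, of a circle fixed by a finite-order diffeomorphism of $\S^3$.

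Next I would apply Smith's conjecture. As $\psi$ is a finite-order diffeomorphism of $\S^3$ whose fixed-point set is a circle, the conjecture asserts that $\psi$ is conjugate, by some diffeomorphism $g$ of $\S^3$, to a standard rotation $\rho$ of order $n$ about an unknotted axis $A=\mathrm{Fix}(\rho)$. I would then check that $g$ descends to a diffeomorphism $\bar g$ between the quotients $\S^3/\langle\psi\rangle$ and $\S^3/\langle\rho\rangle$ that intertwines the two quotient maps; consequently $\bar g$ carries $K=p(\mathrm{Fix}(\psi))$ onto the image of $A$ under the quotient map of $\rho$. Finally I would identify this image: the quotient of $\S^3$ by a standard rotation is again $\S^3$, and $A$ maps homeomorphically onto the core of a standardly embedded solid torus, hence onto an unknot. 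Since $\bar g$ is a diffeomorphism of $\S^3$ and unknottedness is preserved under homeomorphism, $K$ is the trivial knot.

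I do not expect a genuine obstacle once the two cited results are available. The only point that calls for a little care is the compatibility step, namely verifying that the conjugacy $g$ really descends to the quotients and respects the branching data, so that the image of $\mathrm{Fix}(\psi)$ is sent to the image of $A$ and not to some other circle. All of the true difficulty is absorbed into Smith's conjecture itself, whose proof is the deep ingredient; here it is simply a black box.
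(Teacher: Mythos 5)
Your proposal is correct and follows exactly the route the paper indicates: apply the Proposition's characterisation to view $\psi$ as a finite-order diffeomorphism of $\S^3$ with circle fixed-point set, then use the positive solution to Smith's conjecture to conclude that this circle is unknotted, hence its image $K$ in the quotient $\S^3$ is trivial. Your extra care about the conjugacy descending to the quotients is a sound filling-in of details the paper leaves implicit.
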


This observation prompts the following definitions. 

\begin{definition}
Let $K$ be a knot and $n\ge 2$ an integer. We say that $M(K,n)$
\emph{determines} $K$ if for every knot $K'$ such that $M(K,n)$ and $M(K',n)$
are homeomorphic, $K$ and $K'$ are equivalent. 

Otherwise, if there is a knot $K'$ non equivalent to $K$ such that $M(K,n)$ and 
$M(K',n)$ are homeomorphic we say that $K'$ is an \emph{$n$-twin} of $K$.
\end{definition}

Theorem~\ref{ThmSC} above says that for every integer $n\ge 2$, $M(K,n)$
determines the trivial knot, and the trivial knot has no $n$-twins. This can be
rephrased by saying that cyclic branched coverings are strong invariants for 
the trivial knot. Unfortunately not all knots behave like the trivial one with
respect to cyclic branched coverings and $n$-twins do exist. One might then 
hope to be able to determine a knot by considering several cyclic branched 
coverings at the time.

\begin{definition}
Let $F\subset\NN\setminus\{0,1\}$ be a set of integers and $K$ a knot. We say
that \emph{$K$ is determined by the family $(M(K,n))_{n\in F}$ of its
cyclic branched coverings} if whenever $M(K,n)$ and $M(K',n)$ are homeomorphic 
for all $n\in F$ then $K$ and $K'$ are equivalent. 
\end{definition}

It is thus natural to ask two different types of questions:

\begin{qu}
\begin{itemize}
\item Can a knot be determined by its cyclic branched covers? How many do we
need?
\item If $K$ has an $n$-twin $K'$, how can one recover $K'$ from $K$? 
\end{itemize}
\end{qu}

In the sequel we will address these two questions and present what is known
about them\footnote{A substantial part of these notes is based on
the survey \cite{P3} where more examples can also be found.}.


\section{Composite knots}

The following easy result is a first instance of the general principle that the 
geometry (in a large sense) of the knot is reflected in that of its cyclic 
branched coverings.

\begin{Lemma}\label{lCK}
Let $K$ be a knot and $n\ge 2$ an integer. $M(K,n)$ is a prime manifold if and 
only if $K$ is a prime knot. 
\end{Lemma}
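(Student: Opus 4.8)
The plan is to prove the two implications separately, in both cases exploiting the characterisation of $M(K,n)$ as the pair $(M(K,n),\psi)$ given in the Proposition, together with Theorem~\ref{ThmSC}.

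First, the easy direction: if $K$ is composite I would show $M(K,n)$ is not prime. Writing $K=K_1\#K_2$ with both $K_i$ nontrivial, the $2$-sphere $S\subset\S^3$ realising the connected sum meets $K$ transversally in two points, hence is a spherical suborbifold of $(\S^3,K)$, and its preimage is a single $\psi$-invariant $2$-sphere $\Sigma=p^{-1}(S)$ (the $n$-fold cover of $S^2$ branched over two points is again $S^2$). This $\Sigma$ splits $M(K,n)$ into the branched covers of the two tangles $(B_i,K\cap B_i)$, which after capping are copies of $M(K_i,n)$ with an open ball removed; hence $M(K,n)\cong M(K_1,n)\#M(K_2,n)$. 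Since $K_1,K_2$ are nontrivial, Theorem~\ref{ThmSC} gives $M(K_i,n)\not\cong\S^3$, so this is a nontrivial connected sum and $M(K,n)$ is not prime.

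For the converse I would argue by contrapositive: assuming $M=M(K,n)$ is not prime, I produce a connected-sum decomposition of $K$. Being non-prime, $M$ contains an essential separating $2$-sphere and is therefore reducible, so I would invoke the equivariant sphere theorem for the finite cyclic action $\langle\psi\rangle$ to obtain an essential embedded $2$-sphere $\Sigma$ whose translates $\psi^i(\Sigma)$ are pairwise either equal or disjoint. The first point is that $\Sigma$ must meet the axis $\tilde K=\mathrm{Fix}(\psi)$: were it disjoint from $\tilde K$, its orbit would be a free family of disjoint spheres descending to an embedded sphere $p(\Sigma)\subset\S^3$ disjoint from $K$; as $\S^3$ is irreducible this sphere bounds balls on both sides, and the side disjoint from $K$ lifts to disjoint balls bounded by the translates of $\Sigma$, contradicting essentiality. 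Since $\Sigma$ thus meets the fixed circle in a point, that point is $\psi$-fixed, so $\psi(\Sigma)\cap\Sigma\neq\emptyset$ and the disjoint-or-equal dichotomy forces $\psi(\Sigma)=\Sigma$.

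Hence $\langle\psi\rangle$ acts on $\Sigma\cong S^2$; the action is effective (a nontrivial power fixing $\Sigma$ pointwise would force $\Sigma\subset\tilde K\cong\S^1$), so it is conjugate to a genuine rotation of order $n$ with exactly two fixed points, and $\Sigma\cap\tilde K$ consists of precisely two points. Projecting, $F=p(\Sigma)$ is an embedded $2$-sphere in $\S^3$ meeting $K$ in exactly two points; since every sphere in $\S^3$ bounds a ball on each side, $F$ exhibits $K$ as a connected sum $K_1\#K_2$ of the two tangles $(B_i,K\cap B_i)$. Finally I would use essentiality of $\Sigma$ to show this sum is nontrivial: if some $(B_i,K\cap B_i)$ were a trivial tangle, its $n$-fold branched cover would be a ball, and this ball is exactly the region of $M$ cut off by $\Sigma$, so $\Sigma$ would bound a ball, a contradiction. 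Thus both $K_i$ are nontrivial and $K$ is composite. The main obstacle is the correct application of the equivariant sphere theorem and the ensuing position analysis: one must guarantee that reducibility of $M$ yields an essential sphere with a controlled $\langle\psi\rangle$-orbit, and then that this sphere can be forced to meet $\tilde K$ rather than sit in a free orbit away from it. Once equivariance and the resulting invariance are in hand, the rotation normal form of $\psi|_\Sigma$ and the trivial-tangle-lifts-to-a-ball computation are routine.
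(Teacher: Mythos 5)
Your strategy is the same as the paper's on both sides: lift the summing sphere to get $M(K,n)=M(K_1,n)\sharp M(K_2,n)$ (non-trivial by Theorem~\ref{ThmSC}) for one direction, and for the converse apply the equivariant sphere theorem and analyse the image of the equivariant sphere in the quotient. Your first direction is correct, and your endgame (rotation normal form of $\psi|_\Sigma$, two intersection points with the axis, and the observation that a trivial tangle would lift to a ball bounded by $\Sigma$) is a sound and more detailed version of what the paper leaves implicit.

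There is, however, a genuine gap in the step where you force $\Sigma$ to meet the axis. You assert that if $\Sigma$ is disjoint from $\tilde K$ then ``its orbit would be a free family of disjoint spheres''. That does not follow from the equal-or-disjoint dichotomy: $\Sigma$ could be invariant under a non-trivial subgroup of $\langle\psi\rangle$, and since $\mathrm{Fix}(\psi^k)=\tilde K$ is disjoint from $\Sigma$ for every $k$, that stabiliser would act \emph{freely} on $\Sigma$. A non-trivial finite cyclic group can act freely on a $2$-sphere precisely when it has order $2$ (an antipodal-type involution), so this overlooked case arises exactly when $n$ is even --- in particular for $n=2$, the most delicate case of the lemma. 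In that situation the image $p(\Sigma)$ is not an embedded sphere but an embedded projective plane in $\S^3$ disjoint from $K$, and your ``lift the complementary ball'' contradiction does not apply to it. The case is easy to dispose of --- no closed non-orientable surface embeds in $\S^3$, since by Alexander duality every closed embedded surface there is separating, hence two-sided, hence orientable --- and this is precisely why the paper's proof invokes the fact that $\S^3$ contains no essential real projective plane. For odd $n$ your claim is correct (a free action of $\ZZ/m\ZZ$ on $S^2$ forces the quotient to be a closed surface of Euler characteristic $2/m$, so $m\le 2$), but as written your case analysis is incomplete for even $n$ and needs this extra clause to close.
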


\begin{proof}
We will reason by contraposition. Assume that $K$ is not prime and let $S$ be
an embedded sphere in $\S^3$ meeting $K$ in two points and separating it into
two non-trivial arcs realising $K$ as the connected sum of two non trivial
knots $K_1$ and $K_2$, that is $K=K_1\sharp K_2$. Such sphere lifts in $M(K,n)$
to a sphere separating the manifold in two submanifolds which are obtained
from $M(K_1,n)$ and $M(K_2,n)$ by removing a ball, so that
$M(K,n)=M(K_1,n)\sharp M(K_2,n)$. This shows that the branched covering is a
non-trivial connected sum according to Theorem~\ref{ThmSC}. 

For the other implication, by the equivariant sphere theorem, on can assume
to have a decomposition of $M(K,n)$ that is equivariant. Since every $2$-sphere
in $\S^3$ bounds a ball and $\S^3$ does not contain any essential real
projective plane, every sphere of the decomposition of $M(K,n)$ is separating 
and its image in the quotient must intersect $K$. This shows that $K$ is not
prime.
\end{proof}

It follows from the argument in the proof that the $n$-fold branched covering 
of a composite knot is the connected sum of the $n$-fold branched coverings of 
its components. The following essential observation is due to Viro \cite{V,V1}.

\begin{remark}
There are two possible ways to define the connected sum of two knots, according
to the possible orientations of the two knots and disregarding the orientation 
of the resulting ones. The latter are indeed disctinct if their components are
non invertible knots. On the other hand, the $n$-fold cyclic branched coverings 
of the resulting composite knots do not depend on the orientations of the 
original knots. As a consequence there are non equivalent composite knots that 
are $n$-twins for every $n\ge 2$. Moreover, the number of $n$-twins with this
property can be arbitrarily large as it grows with the number of prime
components of the knot.
\end{remark}

\begin{figure}[h]
\begin{center}
  \includegraphics[width=10cm]{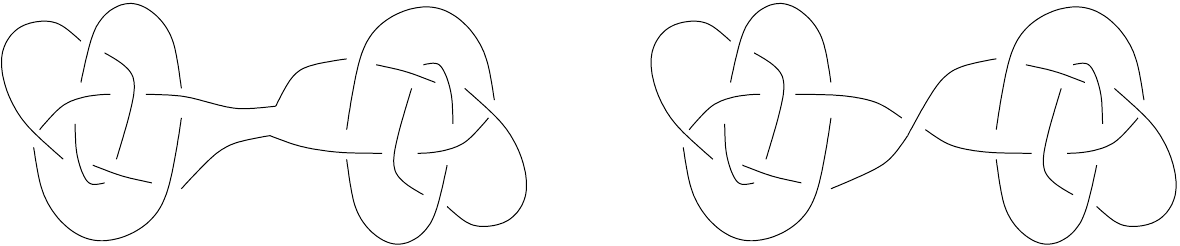}
  \caption{Two composite knots that are $n$-twins for all $n\ge 2$.}
\end{center}
\end{figure}

Roughly speaking the above remark says that one can only hope to determine the
components of a composite knot using cyclic branched coverings, but not the 
knot itself. As a consequence, from now on all knots will be assumed to be 
prime. Note that Lemma~\ref{lCK} assures that a prime knot cannot have a 
composite twin.

A result of Kojima \cite{Ko} assures that cyclic branched coverings are 
reasonably good invariants for prime knots:

\begin{Theorem}[Kojima]
Let $K$ and $K'$ be prime knots. There exists an integer $N$, depending on $K$
and $K'$, such that if $M(K,n)\cong M(K',n)$ for an $n\ge N$ then $K$ and $K'$
are equivalent. 
\end{Theorem}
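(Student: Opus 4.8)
The plan is to recast the statement as a rigidity property of symmetries. Fix a homeomorphism identifying $M(K,n)$ with $M(K',n)$ and call the common manifold $M$. By the Proposition, $M$ then carries two periodic diffeomorphisms of order $n$: one, say $\psi$, coming from the covering $M(K,n)\to\S^3$, and another, $\psi'$, coming from $M(K',n)\to\S^3$, each with fixed-point set a circle, with quotient $\S^3$, and with $K$, respectively $K'$, the image of the fixed circle. If I can show that for $n$ large (depending on $K$ and $K'$) the two actions are conjugate, that is $h\psi h^{-1}=\psi'$ for some diffeomorphism $h$ of $M$, then $h$ descends to a diffeomorphism $\S^3=M/\langle\psi\rangle\to M/\langle\psi'\rangle=\S^3$ carrying $K$ to $K'$, so $K$ and $K'$ are equivalent. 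The whole theorem thus reduces to a single rigidity assertion: once $n$ is large, the order-$n$ rotation with circular fixed set and quotient $\S^3$ is unique up to conjugacy.

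To prove this I would bring in geometry. Since $K$ and $K'$ are prime, Lemma~\ref{lCK} gives that $M$ is prime, and I may assume both knots non-trivial, the trivial case being covered by Theorem~\ref{ThmSC}. The principal case is $K$ (and likewise $K'$) hyperbolic. Here Thurston's hyperbolic Dehn surgery theorem applies to the orbifold $\S^3$ with cone angle $2\pi/n$ along $K$: for $n$ large this orbifold is hyperbolic, hence so is its $n$-fold cover $M$, and the branch circle $\mathrm{Fix}(\psi)$ is isotopic to the core of the filling. Crucially its length tends to $0$ as $n\to\infty$, while the rest of $M$ converges geometrically to the complete structure on $\S^3\setminus K$, so that all remaining closed geodesics have length bounded below by a positive constant $\epsilon_K$. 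Thus for $n$ large $\mathrm{Fix}(\psi)$ is the unique geodesic of $M$ shorter than $\epsilon_K$. The identical discussion for $K'$ produces a positive bound $\epsilon_{K'}$. Choosing $N$ large enough (this is where the dependence on both $K$ and $K'$ enters) and taking $\epsilon=\min(\epsilon_K,\epsilon_{K'})$, the single manifold $M$ has exactly one geodesic below $\epsilon$; since both $\mathrm{Fix}(\psi)$ and $\mathrm{Fix}(\psi')$ lie below it, I conclude $\mathrm{Fix}(\psi)=\mathrm{Fix}(\psi')$ as sets.

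It then remains to realise the two symmetries geometrically and compare them. By Mostow rigidity, together with the fact that a finite-order diffeomorphism of a closed hyperbolic $3$-manifold is conjugate to an isometry, I may assume $\psi,\psi'\in\mathrm{Isom}^+(M)$, a finite group. Both fix the common geodesic $\gamma:=\mathrm{Fix}(\psi)=\mathrm{Fix}(\psi')$ pointwise, so both lie in the subgroup of $\mathrm{Isom}^+(M)$ fixing $\gamma$ pointwise; this subgroup injects into the rotation group $SO(2)$ of a normal plane and is therefore finite cyclic. In a finite cyclic group the subgroup of order $n$ is unique, so $\langle\psi\rangle=\langle\psi'\rangle$, whence $M/\langle\psi\rangle=M/\langle\psi'\rangle$ already as spaces, and $K=K'$. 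For the non-hyperbolic cases (torus knots, whose coverings are Seifert fibred, and satellite knots, whose coverings are toroidal) I would run the same scheme with the canonical structures in place of the metric: the Seifert fibration, respectively the JSJ decomposition, of $M$ is unique up to isotopy and hence preserved up to isotopy by both $\psi$ and $\psi'$, the branch circle appears as a canonical fibre or as the core of a canonical piece, and geometrization again lets me realise both symmetries geometrically and compare them.

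The step I expect to be the genuine obstacle is the uniqueness of the short geodesic, namely showing that for $n\ge N$ the manifold $M$ contains a single geodesic below the relevant threshold and that this geodesic must serve as the branch locus of both structures at once. This is exactly what forces $N$ to depend on both knots, and it is what makes the hypothesis ``$n$ large'' indispensable: for small $n$ the manifold $M$ may genuinely admit inequivalent order-$n$ rotations with circular fixed set, producing the $n$-twins announced above. Making the argument rigorous therefore demands quantitative geometric control, through effective hyperbolic Dehn filling and Margulis-type length estimates in the hyperbolic case, and through an analysis of the finitely many geometric and Seifert pieces in the remaining cases.
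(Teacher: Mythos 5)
The paper itself gives no proof of this statement: it is quoted from Kojima's article \cite{Ko}, so your proposal has to be judged against the known line of argument rather than against a proof in the text. Your reduction of the theorem to conjugacy of the two deck transformations, and your treatment of the case where \emph{both} knots are hyperbolic, is essentially the correct and classical route: for $n$ large the orbifold $(\S^3,K)$ with cone angle $2\pi/n$ is hyperbolic, the branch circle is the unique sufficiently short geodesic, Mostow rigidity puts $\psi$ and $\psi'$ in the finite group $\mathrm{Isom}^+(M)$, and the orientation-preserving isometries fixing a geodesic pointwise form a finite cyclic group, which has a unique subgroup of order $n$, so $\langle\psi\rangle=\langle\psi'\rangle$. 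Two points need repair, though only one is cosmetic. The cosmetic one: it is not ``the rest of $M$'' that converges geometrically to $\S^3\setminus K$ (the volume of $M(K,n)$ tends to infinity); it is the quotient orbifolds that converge, and the lower bound $\epsilon_K$ is obtained by projecting, since every closed geodesic of $M$ other than $\mathrm{Fix}(\psi)$ covers a closed geodesic of the quotient orbifold distinct from the cone axis. (Also, you do not need the deep fact that a finite-order diffeomorphism of a hyperbolic manifold is conjugate to an isometry: $\psi$ and $\psi'$ are isometries by construction of the metrics inherited from the two orbifolds, and Mostow transports one structure to the other.)

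The genuine gap is the non-hyperbolic case, which you dispatch in one sentence but which is where most of the actual work lies, since the theorem is stated for all prime knots. Mixed types are harmless for large $n$ --- by Remark~\ref{rkGeo} a hyperbolic, a Seifert fibred and a toroidal manifold are pairwise non-homeomorphic, so $K$ and $K'$ must be of the same geometric type --- but you should say so. If both knots are torus knots, ``running the same scheme'' collides with the fact that the isometry group of a Seifert fibred manifold is \emph{not} finite (it contains the circle action), so the finite-cyclic-stabiliser argument does not transfer verbatim; what actually settles this case is the classification of the Brieskorn manifolds $M(T(p,q),n)$ by their Seifert invariants, i.e.\ the paper's later Proposition that a torus knot is never a twin of another torus knot. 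The serious case is that of satellite knots: here $M$ is toroidal, there is no global metric and hence no analogue of the ``unique short geodesic'', and nothing in your sketch forces $\mathrm{Fix}(\psi)$ and $\mathrm{Fix}(\psi')$ to lie in the same piece of the JSJ decomposition --- a priori one could sit in a hyperbolic piece and the other in a Seifert piece, or in different hyperbolic pieces. One must show that for large $n$ the JSJ of $M$ is an equivariant lift of a decomposition of the quotient orbifold, locate the branch circles, and then argue piece by piece together with a combinatorial analysis of the dual graph of the decomposition; this equivariant and inductive argument is the bulk of Kojima's proof, and ``geometrization again lets me realise both symmetries geometrically and compare them'' names the tools without supplying it. As written, your argument proves the theorem only for pairs of hyperbolic knots.
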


In other words, any two non-equivalent prime knots $K$ and $K'$ cannot be
$n$-twins for $n$ large enough.  


\section{How to build twins}

It is natural to ask whether the $N$ in Kojima's result can be chosen large
enough to be made independent of $K$ and $K'$. Nakanishi and Sakuma
\cite{N,Sa1} answered this question in the negative by building the first 
examples of $n$-twins for arbitrary $n$. Their construction is as follows.

Take a two-component link $L$ with trivial components $L_1$ and $L_2$. We can
consider the $\ZZ/n\ZZ\times \ZZ/n\ZZ$ branched covering of $L$ to obtain a
manifold $M$. Now, since $\ZZ/n\ZZ\times \ZZ/n\ZZ$ is Abelian, the covering can 
be obtained as a composition of two $n$-fold cyclic coverings in two different
ways. One starts with taking the $n$-fold cyclic coverings of $L_1$. Since this
is a trivial knot, $M(L_1,n)$ is the $3$-sphere. The preimage $K_2$ of $L_2$ in
$M(L_1,n)$ is a link with ${\mathrm{GCD}}({\mathrm{lk}}(L_1,L_2),n)$ components 
(as ${\mathrm{lk}}(L_1,L_2)$ is precisely the image of $L_2$ in the 
Abelianisation of $\pi_1(\S^3\setminus L_1)$). So, if
${\mathrm{GCD}}({\mathrm{lk}}(L_1,L_2),n)=1$ (in particular if
$|{\mathrm{lk}}(L_1,L_2)|=1$), then $K_2$ is a knot and, by construction, one
has $M(K_2,n)\cong M$. Exchanging the roles of $L_1$ and $L_2$ one obtains two
knots, $K_1$ and $K_2$, with the same $n$-fold cyclic branched covering. If $L$
has no symmetries, one can expect $K_1$ and $K_2$ to be non-equivalent and so 
$n$-twins.  

\begin{figure}[h]
\begin{center}
  \includegraphics[width=10cm]{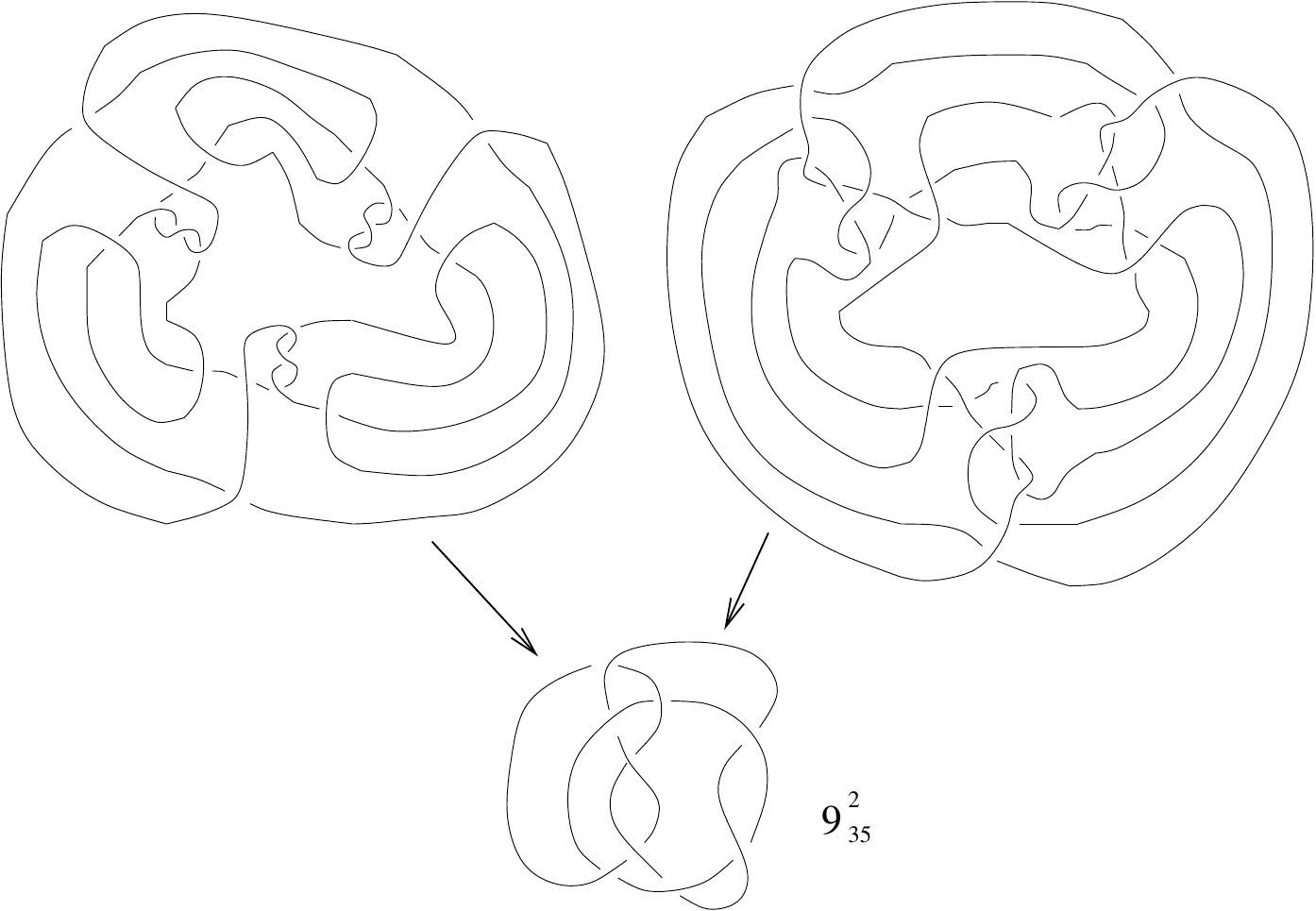}
  \caption{An example of Nakanishi and Sakuma's construction for $n=3$}
\end{center}
\end{figure}

Note that by taking $\ZZ/n\ZZ\times \ZZ/m\ZZ$ instead of $\ZZ/n\ZZ\times
\ZZ/n\ZZ$ one can construct manifolds that are cyclic branched coverings of 
knots for different degrees of the covering.

\subsection{The case $n\ge 3$}

This simple method is particularly remarkable as Zimmermann \cite{Z2} showed 
that this is the only way for a hyperbolic knot to admit an 
$n$-twin\footnote{In \cite{Z2} $n$ is assumed not to be a power of $2$, but a
variation of the same argument applies also to the case $n=2^k$ (see 
\cite{P2}).} for $n\ge 3$. Moreover, if such an $n$-twin exists, because of the 
positive solution to Smith's conjecture, the twin is unique in this case.

\begin{Theorem}[Zimmermann]
Let $K$ be a hyperbolic knot and $n\ge 3$ an integer. $K$ admits an $n$-twin
$K'$ if and only if $K$ admits a \emph{period} of order $n$ with \emph{trivial
quotient} such that the two components of the \emph{quotient link} are not
exchangeable. 
\end{Theorem}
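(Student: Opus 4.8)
The plan is to establish the two implications separately; the forward (``if'') direction is a direct application of the Nakanishi--Sakuma construction, while the converse carries the real weight. For the ``if'' direction, I would start from a period $\rho$ of $K$ of order $n$, i.e.\ an order-$n$ diffeomorphism of $\S^3$ fixing a circle $A$ disjoint from $K$ and preserving $K$. By the positive solution to Smith's conjecture $A$ is unknotted and $\rho$ is a standard rotation, so the quotient $q\colon\S^3\to\S^3/\langle\rho\rangle\cong\S^3$ sends $K$ to a knot $\bar K$ and $A$ to an unknot $\bar A$, producing the quotient link $L=\bar K\cup\bar A$. The \emph{trivial quotient} hypothesis says $\bar K$ is unknotted, so both components of $L$ are trivial. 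Running the Nakanishi--Sakuma construction on $L$, branching first over $\bar A$ recovers $\S^3$ with $\bar K$ lifting to $K$, and branching first over $\bar K$ recovers $\S^3$ with $\bar A$ lifting to a knot $K'$; both yield the common $\ZZ/n\ZZ\times\ZZ/n\ZZ$-branched cover $M$ of $L$, whence $M(K,n)\cong M(K',n)$. Since $K$ and $K'$ are equivalent precisely when a diffeomorphism of $\S^3$ exchanges the two components of $L$, the assumption that they are \emph{not exchangeable} makes $K'$ a genuine $n$-twin.

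For the converse, assume $K$ is hyperbolic, $n\ge 3$, and $K'$ is an $n$-twin, so that $M(K,n)\cong M(K',n)=:M$. The first move is to pass from topology to geometry: by the orbifold theorem the orbifold $(\S^3,K)$ with branching index $n$ is hyperbolic for $n\ge 3$, so its manifold cover $M$ is a closed hyperbolic $3$-manifold, and Mostow rigidity then makes $\mathrm{Isom}(M)$ finite and forces every periodic diffeomorphism to be isotopic to an isometry. I would accordingly realise the two deck transformations, $\psi$ for the covering $M\to\S^3$ branched over $K$ and $\psi'$ for the covering branched over $K'$, as orientation-preserving isometries of order $n$ whose fixed-point sets $\mathrm{Fix}(\psi)=\tilde K$ and $\mathrm{Fix}(\psi')=\tilde K'$ are closed geodesics, and then study the finite group $G=\langle\psi,\psi'\rangle\le\mathrm{Isom}^+(M)$.

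The crux --- and the step I expect to be the main obstacle --- is to show that $G$ is essentially the abelian group $\ZZ/n\ZZ\times\ZZ/n\ZZ$ of the Nakanishi--Sakuma picture. Concretely, I would aim to prove that, after possibly conjugating $\psi'$, the two fixed geodesics $\tilde K$ and $\tilde K'$ are disjoint and that $\langle\psi\rangle$ is normal in $G$ with $\langle\psi\rangle\cap\langle\psi'\rangle=\{1\}$. This requires controlling the mutual position of the two axes (ruling out linked or intersecting configurations) and eliminating the non-abelian finite subgroups that two order-$n$ elliptic isometries could generate; throughout, the Smith conjecture keeps the relevant fixed-point sets unknotted circles, and one leans on the classification of finite group actions on hyperbolic $3$-manifolds. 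The argument is cleanest when $n$ is not a power of $2$, the case $n=2^k$ needing the separate treatment alluded to above. Granting the normality, $\psi'$ descends through $M\to M/\langle\psi\rangle\cong\S^3$ to a diffeomorphism $\rho$ of order $n$ that preserves $K$ and fixes the image of $\tilde K'$, a circle disjoint from $K$ --- that is, a \emph{period} of $K$ of order $n$.

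Finally I would read off the two conditions of the statement from the common quotient $M/G$. Since $G$ is abelian, $M/G\cong\S^3$ is obtained from either $M/\langle\psi\rangle\cong\S^3$ or $M/\langle\psi'\rangle\cong\S^3$ by a further cyclic quotient, and the singular locus is the quotient link $L=\bar K\cup\bar A$, the images of $\tilde K$ and $\tilde K'$. Each component is unknotted, being the axis-image of a standard rotation: applying Smith's conjecture to $\psi$ acting on $M/\langle\psi'\rangle$ makes $\bar K$ trivial, so the quotient is \emph{trivial}. Were the two components of $L$ exchangeable, the configuration would be symmetric in $K$ and $K'$ and, by Mostow rigidity, the exchange would lift to an equivariant identification of the two branched coverings, forcing $K\cong K'$ and contradicting that $K'$ is a twin; hence the components are \emph{not exchangeable}. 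This also accounts for the uniqueness of the twin recorded above, as $K'$ is then recovered from $K$ as the lift of $\bar A$ in the branched cover of $\bar K$.
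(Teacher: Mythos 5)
Your overall architecture coincides with the paper's: forward direction via the Nakanishi--Sakuma construction, converse via geometrisation of $M(K,n)$, realisation of the two deck transformations $\psi,\psi'$ as isometries inside the finite group $\mathrm{Isom}(M)$, and then the reduction to the abelian $\ZZ/n\ZZ\times\ZZ/n\ZZ$ picture. However, there are two genuine gaps. First, your opening claim in the converse --- that the orbifold theorem makes $M$ hyperbolic for every hyperbolic $K$ and every $n\ge 3$ --- is false: the $3$-fold cyclic branched cover of the figure-eight knot is a Euclidean (flat) manifold, not a hyperbolic one. The paper is careful on exactly this point (Remark~\ref{rkGeo} requires $n\ge 4$, or $n=3$ and $K$ not the figure-eight knot), and its proof sketch begins by \emph{verifying separately} that $K$ cannot be the figure-eight knot before hyperbolicity of $M(K,n)$ can be invoked. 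Without that preliminary step your use of Mostow rigidity has no footing in the case $n=3$.

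Second, the step you yourself identify as the crux --- that after conjugation $\psi$ and $\psi'$ commute, with $\langle\psi\rangle\cap\langle\psi'\rangle=\{1\}$ --- is described as a goal but never actually argued: ``controlling the mutual position of the two axes'' and ``leaning on the classification of finite group actions'' is a restatement of what must be proved, not a proof. The paper's route here is concrete: both $\psi$ and $\psi'$ lie in the finite group $\mathrm{Isom}(M)$, and one applies elementary finite group theory to its Sylow $p$-subgroups (for $p$ dividing $n$), combined with geometric considerations on fixed-point sets, to produce the commuting conjugates; this is also where the hypothesis that $n$ is not a power of $2$ enters in Zimmermann's original argument, with the variant for $n=2^k$ requiring a different treatment. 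A related soft spot is your ``precisely when'' in the forward direction: that equivalence of $K$ and $K'$ forces the two components of the quotient link to be exchangeable is itself a rigidity statement (it uses hyperbolicity and the uniqueness, up to conjugacy, of the cyclic symmetries involved), and it deserves an argument rather than an assertion.
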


\begin{definition}
Let $K$ be a knot and $n\ge 2$. A \emph{period of order $n$} of $K$ is an
orientation-preserving, order $n$, diffeomorphism $\phi$ of the pair $(\S^3,K)$ 
such that ${\mathrm{Fix}}(\phi)$ is a circle disjoint from $K$. It has
\emph{trivial quotient} if the image of $K$ in the quotient $\S^3=S^3/\langle 
\phi \rangle$ is the trivial knot. The \emph{quotient link} is the image of
$K\cup {\mathrm{Fix}}(\phi)$ in the quotient.
\end{definition}

The ingredients in this result are basically two: cyclic branched coverings
reflect the geometry of the knot and the isometry group of a hyperbolic 
manifold is finite. The first point is made precise in the following remark.

\begin{remark}\label{rkGeo}
Let $K$ be a prime knot and $n\ge 2$. If $K$ is a torus knot then $M(K,n)$ is
Seifert fibred. If $K$ is a satellite knot, then $M(K,n)$ is toroidal (in fact
admits a non trivial JSJ decomposition). If $K$ is hyperbolic then, by
Thurston's orbifold theorem \cite{BLP}, $M(K,n)$ is hyperbolic provided that 
either $n\ge 4$ or $n=3$ and $K$ is not the figure-eight knot. If $n=2$, 
$M(K,n)$ can be hyperbolic, Seifert fibred, or admit a non trivial JSJ 
decomposition.  
\end{remark}

This remark says in particular that the twin $K'$ in Zimmermann's result is 
also a hyperbolic knot. 

Assuming now that a hyperbolic knot $K$ has an $n$-twin $K'$ for some $n\ge 3$, 
one starts by verifying that $K$ cannot be the figure-eight knot, so that
$M(K,n)$ is hyperbolic. In particular the automorphisms $\psi$ and $\psi'$ of 
the branched coverings for $K$ and $K'$ can be assumed to be hyperbolic
isometries and so elements of the same finite group. Elementary finite group 
theory (namely, Sylow $p$-subgroups) and geometric considerations lead to the 
conclusion that, up to choosing appropriate conjugates, $\psi$ and $\psi'$ 
commute.

For prime satellite knots, one cannot hope to be able to choose $\psi$ and
$\psi'$ so that they commute for all $n$-twins $K$ and $K'$. Explicit examples
of $n$-twins that do not behave like in Nakanishi and Sakuma's construction are 
known for $n$ an odd prime number \cite{BP}. Nonetheless one has the following
result \cite{BP}.

\begin{Theorem}[Boileau-P.]
Let $K$ be a prime knot and $n\ge 3$ a prime integer. $K$ admits at most one
$n$-twin and for at most two values of $n$. 
\end{Theorem}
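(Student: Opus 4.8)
The plan is to recast the statement as one about finite-order diffeomorphisms of the single manifold $M=M(K,n)$, and then to argue according to the geometry of $K$. If $K'$ is an $n$-twin then $M(K',n)\cong M$, so by the characterisation Proposition the admissible knots with branched covering $M$ are in bijection with conjugacy classes of periodic diffeomorphisms $\psi$ of $M$ of order $n$ with $\mathrm{Fix}(\psi)\cong\S^1$ and $M/\langle\psi\rangle\cong\S^3$. Fixing the prime $p=n$, the goal is to bound the number of such conjugacy classes. Since $K$ is prime, Lemma~\ref{lCK} makes every twin prime, and Remark~\ref{rkGeo} splits the analysis into the torus, satellite, and hyperbolic cases according to the geometry of $M$.

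The hyperbolic case is essentially Zimmermann's theorem: for $n\ge 3$ (with the figure-eight at $n=3$, where $M$ is a flat manifold, treated separately) the manifold $M$ is hyperbolic, the periodic maps are isometries in the finite group $\mathrm{Isom}^+(M)$, and the Sylow-plus-geometry argument already quoted lets one conjugate any two of them to commute; the resulting $\ZZ/p\ZZ\times\ZZ/p\ZZ$-action, combined with the positive solution of Smith's conjecture, forces uniqueness of the twin. For torus knots $K=T(a,b)$ the covering is the Seifert fibred Brieskorn manifold $\Sigma(a,b,n)$; any twin is again prime and, by Remark~\ref{rkGeo}, neither hyperbolic nor toroidal, hence a torus knot, and the rigidity of the normalised Seifert invariants of $\Sigma(a,b,n)$ forces its multiplicities to equal $\{a,b\}$, leaving at most the mirror image as a candidate twin.

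The substantial case, and the one where the commuting trick genuinely fails, is that of satellite knots. Here $M$ is toroidal and I would work with its JSJ decomposition. The first step is to make the decomposition equivariant: by the equivariant torus theorem the canonical JSJ tori can be isotoped to be invariant under the finite action generated by a given $\psi$, so that both $\langle\psi\rangle$ and $\langle\psi'\rangle$ act on the JSJ graph and permute the geometric pieces. One then analyses the induced actions piece by piece — on hyperbolic pieces via Mostow rigidity as above, on Seifert pieces via fibre-preserving maps — and propagates the local uniqueness statements along the companionship tree, using that $M/\langle\psi\rangle\cong\S^3$ to pin down how the quotient is reassembled. I expect this bookkeeping to be the main obstacle: since non-commuting twins do occur for satellites, one cannot reduce to a single abelian action, and one must instead show that the pattern/companion data recovered from the two fillings of the same toroidal $M$ agree up to a single ambiguity, which is precisely what yields ``at most one twin''.

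Finally, for the bound on the number of admissible $n$ I would pass to the symmetry group of $K$ itself. By the characterisation of twins, an $n$-twin exists only when $K$ carries a period of order $n$ with trivial quotient whose quotient-link components are not exchangeable; in the hyperbolic case such a period is an elliptic isometry of the finite-volume manifold $\S^3\setminus K$ rotating about a closed geodesic axis disjoint from $K$. Twins for three distinct primes $n_1,n_2,n_3$ would produce three such periods of coprime orders inside the finite group $\mathrm{Isom}^+(\S^3\setminus K)$, and I would derive a contradiction from the incompatibility of their rotation axes together with the trivial-quotient constraint, which forces each axis to become an unknot whose complement carries $K$. Making this count sharp — showing the obstruction already bites at the third prime, so that at most two values of $n$ survive — is the second delicate point, and it is where the elementary group theory of the symmetry group must be combined most carefully with the geometry of the fixed axes.
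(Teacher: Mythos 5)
The paper itself does not prove this theorem: it is quoted from \cite{BP}, so your proposal has to stand on its own, and as written it is a programme rather than a proof. Your framing (twins as conjugacy classes of order-$n$ diffeomorphisms of the fixed manifold $M=M(K,n)$ with the properties of the characterisation Proposition, split by geometry via Lemma~\ref{lCK} and Remark~\ref{rkGeo}) is the right one, and the hyperbolic and torus-knot cases follow the known lines sketched in the paper. But the two places you yourself flag as ``delicate'' are exactly the content of the theorem, and they are not filled in. In the satellite case you propose to make the JSJ tori equivariant for each action and then ``propagate local uniqueness along the companionship tree''; since -- as you note and as the paper stresses -- the two cyclic actions cannot in general be conjugated to commute, there is no single decomposition equivariant for both, and showing that the two quotients of the same toroidal $M$ can differ by at most one ambiguity is precisely the hard analysis of \cite{BP} (equivariant decompositions of the associated orbifolds and control of how the two non-commuting cyclic groups interact on the geometric pieces), not bookkeeping that can be waved through.

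The second gap is worse because it is not merely an omission: your argument for ``at most two values of $n$'' rests on the claim that an $n$-twin exists only when $K$ carries a period of order $n$ with trivial quotient. That characterisation is Zimmermann's theorem \cite{Z2} and is stated only for \emph{hyperbolic} $K$; the paper explicitly recalls that \cite{BP} constructs prime satellite knots with $n$-twins, $n$ an odd prime, that do \emph{not} arise from the Nakanishi--Sakuma mechanism, so the implication you invoke fails in exactly the case where you need it. Moreover, even granting the periods, the step ``three periods with trivial quotients and pairwise distinct orders $>2$ force $K$ to be trivial'' is itself the nontrivial Boileau--Paoluzzi theorem stated at the end of the paper, whose proof is elementary only when the periods commute; you supply no argument for the non-commuting situation. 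So both halves of the statement -- uniqueness of the $n$-twin for satellite knots, and the bound of two admissible primes for general prime knots -- remain unproven in your proposal.
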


\begin{Corollary}
Let $K$ be a prime knot. The family $(M(K,3),M(K,5),M(K,7))$ determines $K$.
\end{Corollary}

\subsection{The case when $n=2$}

As one may guess from Remark~\ref{rkGeo} above, there is a plethora of possible 
ways to obtain $2$-twins, even of hyperbolic knots. We will discuss different 
cases according to the geometry of $M(K,2)$.

\subsubsection{$M(K,2)$ is hyperbolic.}

In this case, just like in the previous section, the existence of a $2$-twin
for a hyperbolic knot entails the presence of symmetris of the knot. In this
case, the symmetries need not be periods: strong involutions appear and the
two-component link of the construction at the beginning of the section can be
replaced with a theta-curve (examples of this type were first studied by
Zimmermann \cite{Z1}). Also, taking a single quotient by a symmetry need not be
sufficient to ``see" all the $2$-twins of the knot. An accurate study of the
Sylow $2$-subgroup gives the following result \cite{Re2} (see \cite{MR} for a
more combinatorial approach by considering successive quotients).

\begin{Theorem}[Reni, Mecchia-Reni]
Let $K$ be a hyperbolic knot and assume that $M(K,2)$ is hyperbolic. $K$ has at
most eight $2$-twins.
\end{Theorem}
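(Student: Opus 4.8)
The strategy parallels Zimmermann's: realise everything inside the finite isometry group of the hyperbolic manifold $M=M(K,2)$ and convert the counting of twins into a group-theoretic count of conjugacy classes of involutions. By Mostow--Prasad rigidity the group $G=\mathrm{Isom}(M)$ is finite and the mapping class group of $M$ is isomorphic to $G$, so every self-homeomorphism of $M$ is isotopic to an isometry and a mapping class of order $2$ is realised by an isometric involution. Hence the deck involution $\psi$ attached to $K$ and, for any $2$-twin $K'$, the deck involution $\psi'$ coming from a homeomorphism $M(K,2)\cong M(K',2)$ may both be taken to lie in $G$. Call $\tau\in G$ \emph{admissible} if $\mathrm{Fix}(\tau)\cong\S^1$ and $M/\langle\tau\rangle\cong\S^3$; by the Proposition such a $\tau$ exhibits $M$ as the $2$-fold branched cover of the knot $p_\tau(\mathrm{Fix}(\tau))$. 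Two admissible involutions yield equivalent knots exactly when they are conjugate in $G$, since a conjugating isometry descends to a homeomorphism of the quotient pairs $(\S^3,\text{knot})$. Therefore the number of $2$-twins of $K$ equals the number of $G$-conjugacy classes of admissible involutions minus one, and it suffices to bound the latter by nine.

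To bound this count I would first reduce to a single Sylow $2$-subgroup $S\le G$: each admissible involution, being a $2$-element, is conjugate into $S$, so one only needs to understand the admissible involutions inside $S$ together with their fusion in $G$. The decisive extra input is that $M$ is a $\ZZ/2$-homology sphere, because $|H_1(M(K,2))|=|\Delta_K(-1)|$ is odd, whence $H_1(M;\ZZ/2)=0$. Smith theory for the action of $S$ on the $\ZZ/2$-homology $3$-sphere $M$ then severely restricts $S$: it bounds the rank of the elementary abelian $2$-subgroups and pins down the fixed-point data of involutions, each admissible one being forced to fix a $\ZZ/2$-homology $\S^1$.

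From here I would run a case analysis on the isomorphism type of $S$ and the position of the admissible involutions within it. The model situation is the Nakanishi--Sakuma picture, in which two admissible involutions commute and generate a $(\ZZ/2)^2$ whose third involution need not be admissible; iterating shows that a family of pairwise commuting admissible involutions lies in a small elementary abelian $2$-group whose order is capped by the Smith-theoretic rank bound. The remaining work is to treat admissible involutions that do \emph{not} commute, so that $\langle\psi,\psi'\rangle$ is dihedral, and to control fusion, showing that across all configurations the total number of $G$-conjugacy classes of admissible involutions is at most nine, hence at most eight twins.

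I expect the genuine obstacle to be precisely this last combinatorial step. Smith theory bounds the rank but not directly the number of conjugacy classes of \emph{admissible} (as opposed to arbitrary) involutions, and the non-commuting case forces one to analyse dihedral and larger $2$-subgroups of $S$ together with their fusion in $G$. Extracting the sharp constant $8=9-1$ from this enumeration — rather than a weaker, non-explicit bound — is where the real effort lies, and is exactly what makes the case $n=2$ harder than the clean uniqueness statements available for $n\ge 3$.
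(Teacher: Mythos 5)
Your framework is the right one, and it matches the approach the paper attributes to Reni and Mecchia--Reni (realise deck involutions as isometries of the hyperbolic manifold $M=M(K,2)$, note that $M$ is a $\ZZ/2$-homology sphere because $|H_1(M(K,2))|=|\Delta_K(-1)|$ is odd, and then make ``an accurate study of the Sylow $2$-subgroup'' of the finite isometry group). But what you have written is a strategy, not a proof: the entire content of the theorem is the sharp bound of nine conjugacy classes of admissible involutions, and that is precisely the step you defer (``the remaining work is to treat admissible involutions that do not commute \dots and to control fusion''). Nothing in your argument produces the number nine, or indeed any finite bound beyond the trivial one coming from finiteness of $\mathrm{Isom}(M)$ (which depends on $M$ and so does not give a universal constant). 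The Smith-theoretic rank bound on elementary abelian $2$-subgroups, the dihedral analysis of non-commuting admissible involutions, and the fusion control in the Sylow $2$-subgroup are exactly where Reni's proof lives; listing them as tasks to be done is acknowledging the gap, not closing it. So the proposal fails as a proof of the stated theorem: it reproduces the (correct) setup that any expert would write down and omits the part that earned the theorem its name.

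Two smaller points. First, your claim that two admissible involutions yield equivalent knots \emph{exactly} when they are conjugate in $G=\mathrm{Isom}(M)$ overstates what you need and what is easy: the ``only if'' direction (equivalent knots force conjugacy in $G$) requires upgrading a conjugating diffeomorphism to a conjugating isometry, which is not automatic. Fortunately the upper bound only needs the surjection from conjugacy classes of admissible involutions onto knots with $2$-fold branched cover $M$, so you should phrase the count as an inequality, not an equality. Second, the passage from the deck involution of a twin to an isometric involution is not a consequence of Mostow rigidity alone (rigidity identifies the mapping class group with $G$, but does not by itself realise a finite-order diffeomorphism by an isometry); you need geometrisation of finite group actions (Thurston's orbifold theorem, as in \cite{BLP}) to conjugate the involution into $G$. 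Both points are repairable, but the missing Sylow analysis is not a repair --- it is the proof.
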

 
Using Kawauchi's strongly almost identical (AID) imitation theory one can show 
that nine $2$-twins must exist \cite{Ka}, but explicit constructions are only 
known for sets of two, three, and four twins \cite{RZ1}. 

\begin{figure}[h]
\begin{center}
  \includegraphics[width=6cm]{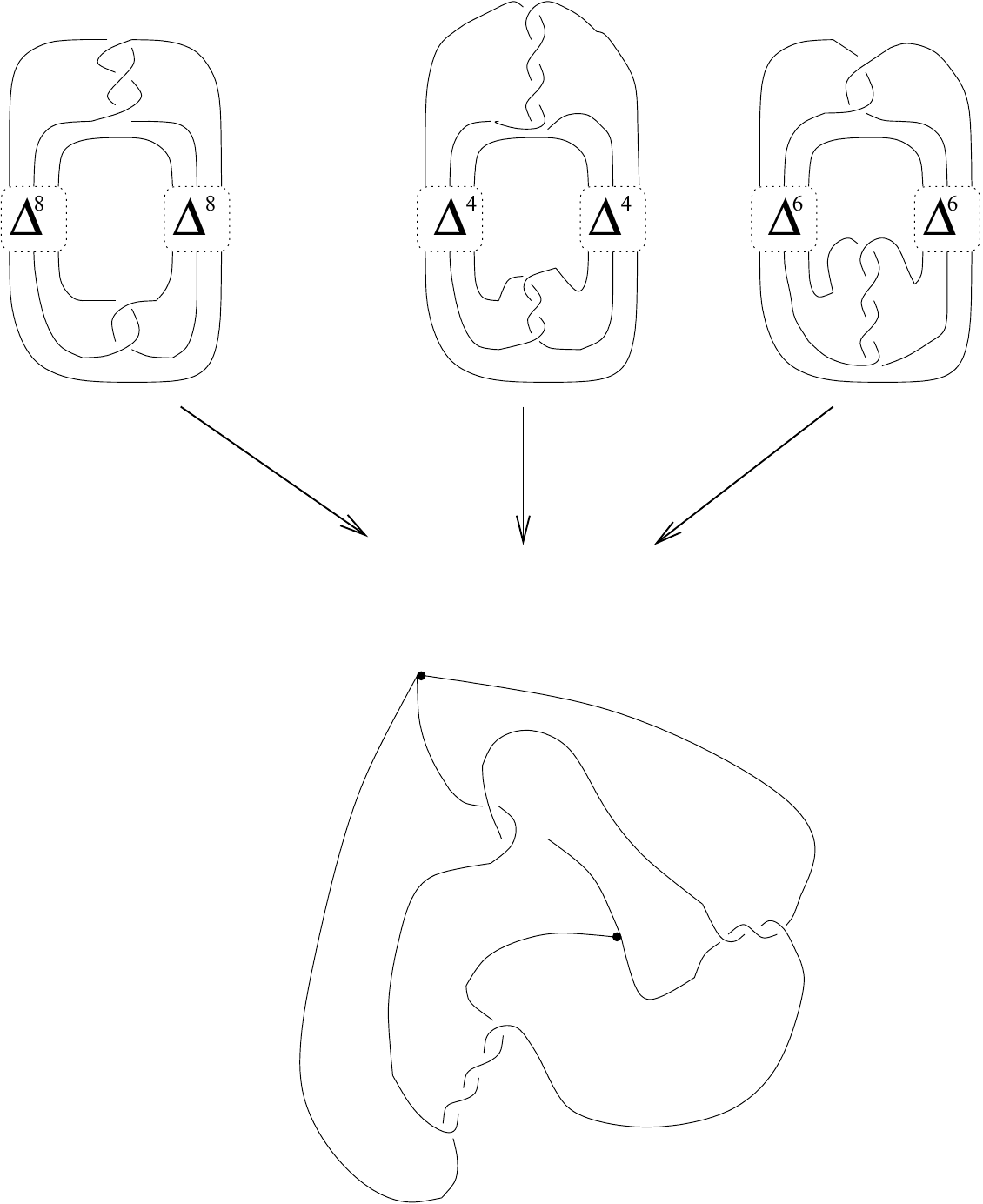}
  \caption{Three $2$-twins related by strong inversions and having quotient a
theta-curve.}
\end{center}
\end{figure}

\subsubsection{$M(K,2)$ is Seifert fibred.}

\begin{figure}[h]
\begin{center}
  \includegraphics[width=10cm]{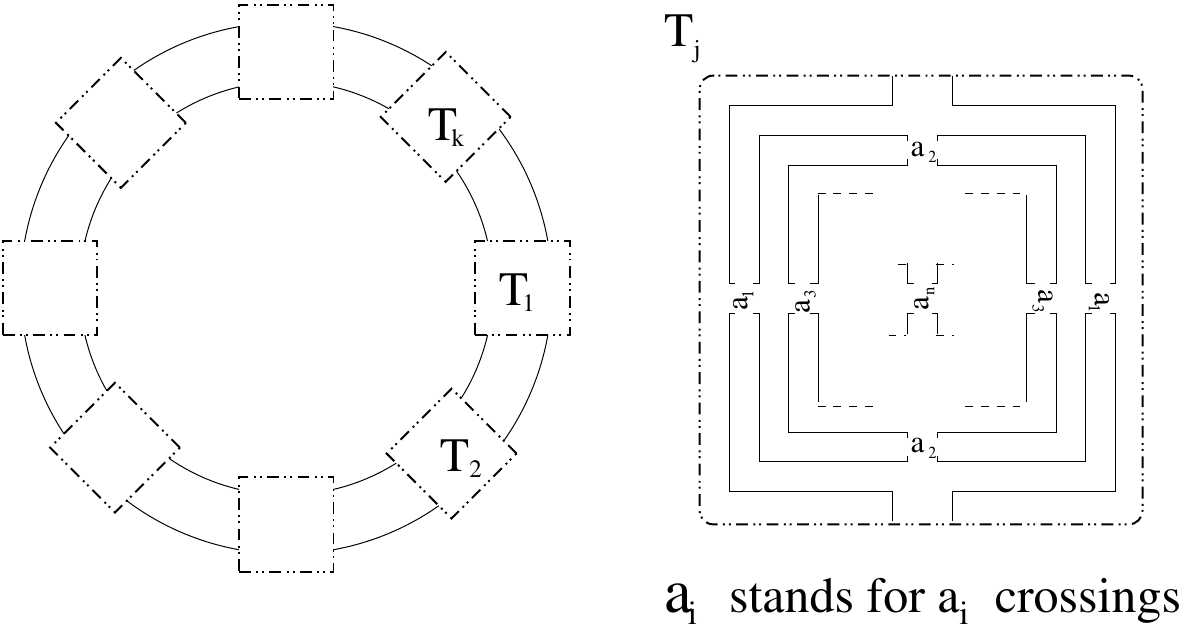}
  \caption{A schematic picture of a Montesinos link obtained by stacking 
together rational tangles, on the left, and one of a rational tangle, on the
right.}
\end{center}
\end{figure}

\begin{figure}[h]
\begin{center}
  \includegraphics[width=6cm]{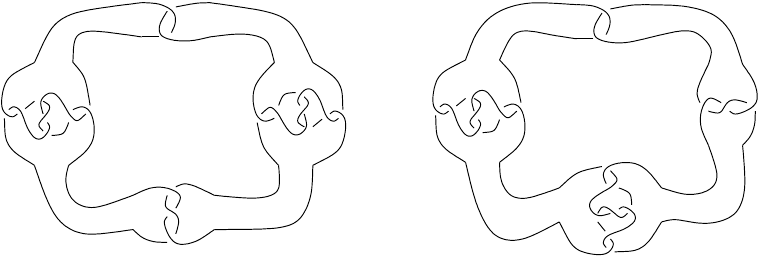}
  \caption{Two Montesinos knots that are $2$-twins.}
\end{center}
\end{figure}

In this case $K$ is a Montesinos knot \cite{Mo}. The number of $2$-twins of a 
Montesinos knot can be arbitrarily large as two Montesinos knots having the 
same rational tangles have the same $2$-fold branched covering regardless of 
the order in which their rational tangles are arranged. Indeed, each rational 
tangle lifts in $M(K,2)$ to the neighbourhood of an exceptional fibre and 
$M(K,2)$ is independent of their order. 

If the Montesinos knot has precisely three rational tangles it does not have
any hyperbolic $2$-twins but may have a $2$-twin which is a torus knot.

\subsubsection{$M(K,2)$ has a non-trivial JSJ decomposition.}

Since the complement of $K$ is atoroidal, the covering tranformation must have
a fixed-point set that intersect all essential tori of the JSJ decomposition of 
$M(K,2)$ in four points and must act as an elliptic involution on 
each of them. Such tori give rise to essential Conway spheres for the knot. In 
this case there is a well-known method to produce $2$-twins: Conway mutation. 
Conway mutation along a Conway sphere that separates two non-trivial 
non-symmetric tangles results in a $2$-twin, as for the Conway and 
Kinoshita-Terasaka knots.

\begin{figure}[h]
\begin{center}
  \includegraphics[width=8cm]{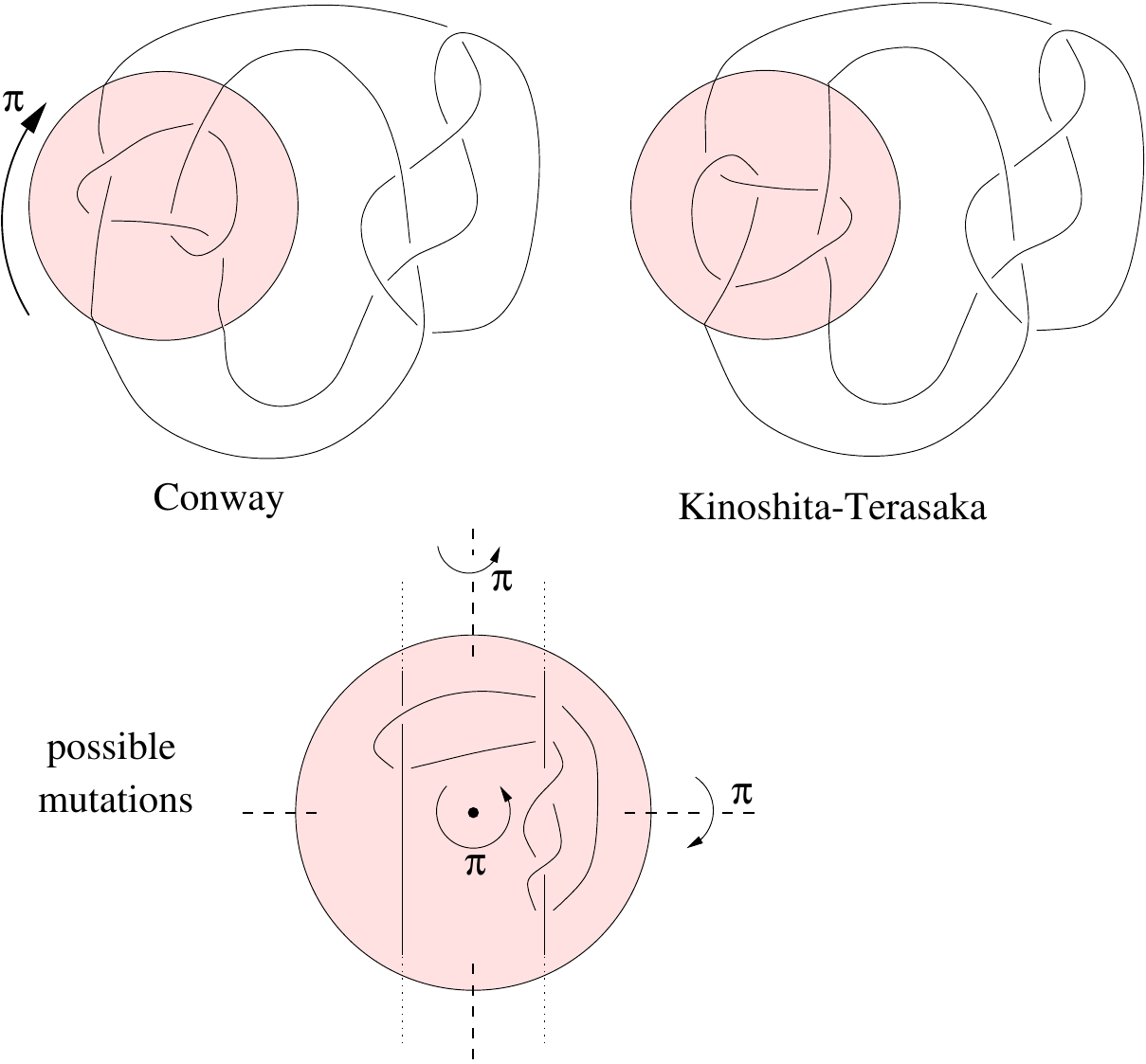}
  \caption{The Conway and Kinoshita-Terasaka knots and possible mutations along
a Conway sphere.}
\end{center}
\end{figure}

Note that two Montesinos knots that are $2$-twins are also obtained by a
sequence of Conway mutations.

\begin{figure}[h]
\begin{center}
  \includegraphics[width=6cm]{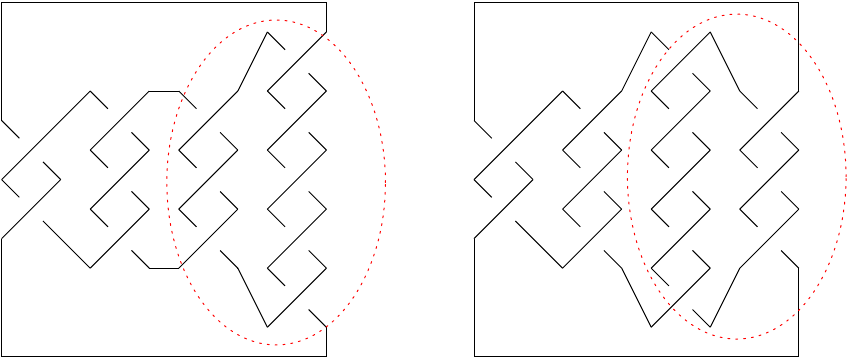}
  \caption{Two Montesinos knots that are $2$-twins in which the Conway mutation
is shown.}
\end{center}
\end{figure}

Conway mutation is not the end of the story, though. In this case $2$-twins can
arise from a combination of the possibilities seen so far (mutation,
permutation of rational tangles, symmetries that might be localised on some 
geometric part) and can even be satellite knots \cite{P1}. The first examples 
of twins of this type can be found in a paper by Montesinos and Whitten
\cite{MW}.

\begin{figure}[h]
\begin{center}
  \includegraphics[width=10cm]{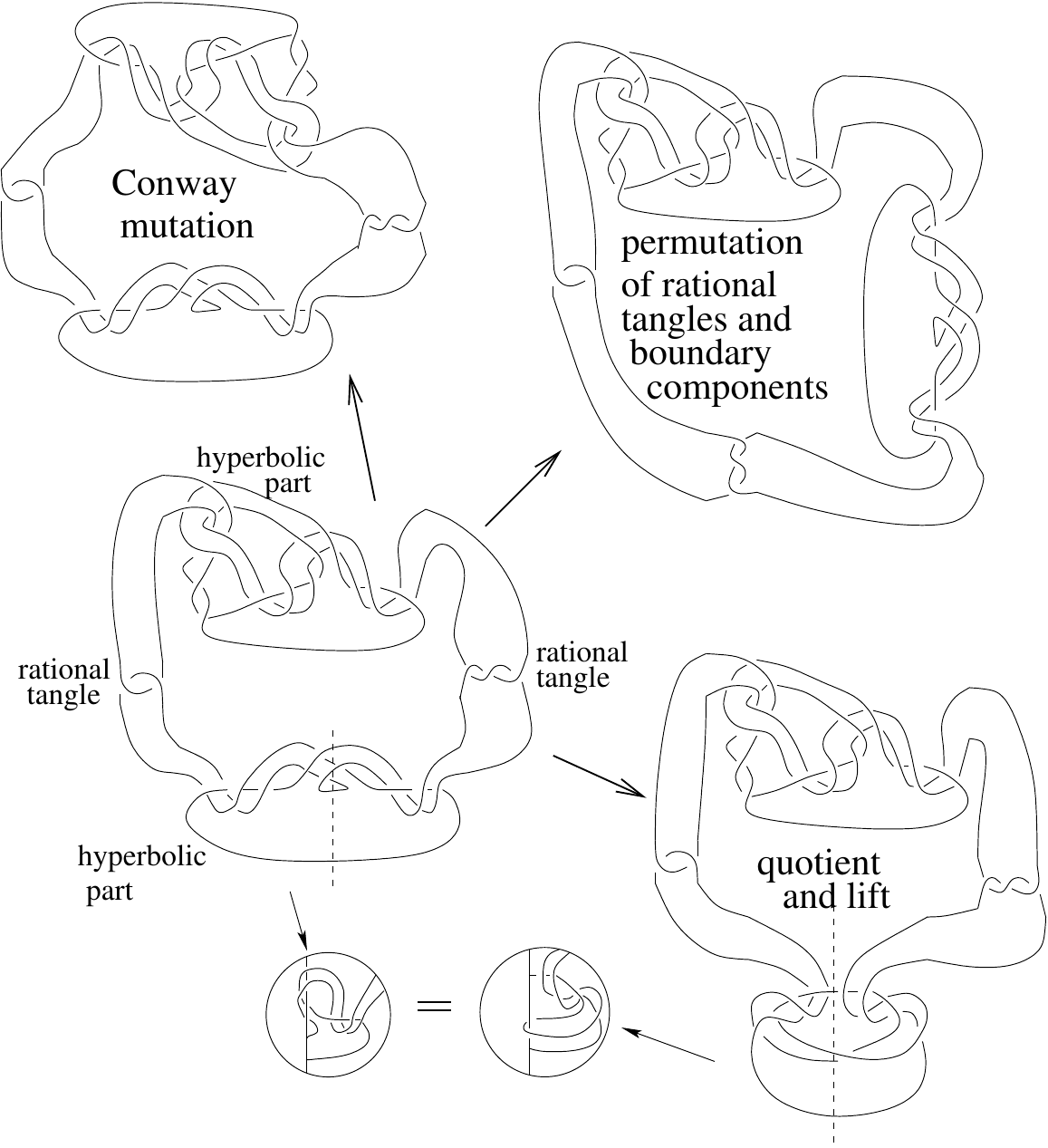}
  \caption{Hyperbolic $2$-twins obtained by mutations and local
Nakanishi-Sakuma's method.}
\end{center}
\end{figure}

\subsection{The case of alternating knots.}

In spite of the wide variety of constructions mentioned in this section,
$2$-twins of alternating knots only come from mutation if they are themselves
alternating, as shown by Greene \cite{G1}.

\begin{Theorem}[Greene]
If $K$ and $K'$ are prime alternating knots which are $2$-twins, then $K$ and
$K'$ are obtained by a sequence of Conway mutations.
\end{Theorem}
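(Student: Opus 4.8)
The plan is to translate the topological hypothesis $M(K,2)\cong M(K',2)$ into a lattice-theoretic statement, and then into a graph-theoretic one, where Conway mutation admits a clean combinatorial description. First I would fix reduced alternating diagrams $D$ and $D'$ for $K$ and $K'$ and pass to their Tait (checkerboard) graphs $G$ and $G'$. Each such graph carries a positive-definite Goeritz form --- essentially the graph Laplacian restricted to cycles --- producing an integral lattice $\Lambda_G$, and the relevant classical fact is that the double branched cover $Y=M(K,2)$ bounds a sharp positive-definite $4$-manifold $X_G$ whose intersection lattice is $\Lambda_G$ (the trace of one checkerboard surface). Since the two checkerboard surfaces have opposite definiteness, $Y$ also bounds a sharp negative-definite filling, and it is precisely this two-sided sharpness that characterises the alternating situation and that I expect to be the decisive feature.

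The combinatorial dictionary I would set up next is that a Whitney $2$-isomorphism between Tait graphs corresponds to a Conway mutation of the alternating knot. Concretely, Whitney's theorem asserts that two connected graphs have isomorphic cycle matroids --- equivalently, isomorphic flow (Goeritz) lattices --- if and only if they are related by a finite sequence of vertex identifications and Whitney flips, i.e. twists along a $2$-vertex cut. On the knot side a Whitney flip reflects the tangle associated to the twisted block, which is exactly a Conway mutation. So the theorem reduces to showing that $Y=M(K,2)\cong M(K',2)$ forces $\Lambda_G\cong\Lambda_{G'}$: once the Goeritz lattices are isometric, Whitney's theorem returns a chain of flips, hence a chain of mutations relating $K$ and $K'$.

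The hard part --- the real content of the theorem --- is upgrading a homeomorphism of the double covers to an isometry of the associated Goeritz lattices. This is far from formal, since a priori $Y$ can bound many non-isometric definite lattices. The approach is to exploit that $Y$ is an L-space (alternating knots are quasi-alternating, so their double branched covers are L-spaces) bounding the sharp definite manifold $X_G$, and to invoke the Ozsv\'ath--Szab\'o correction terms $d(Y,\mathfrak{s})$. For a sharp filling the correction terms of $Y$ are computed exactly by the extremal values of the quadratic form over characteristic vectors of $\Lambda_G$; this rigidity, in the spirit of Donaldson's diagonalisation theorem, is what pins down the lattice. I would then argue that any two sharp definite lattices of equal rank realising the same family of correction terms and the same discriminant (linking) form of $Y$ must be isometric, whence $\Lambda_G\cong\Lambda_{G'}$. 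The most delicate step, demanding the greatest care, is this lattice-uniqueness analysis: showing that the Heegaard Floer data of $Y$ determines the Goeritz lattice up to \emph{isometry}, rather than merely up to some weaker equivalence, and it is here that the two-sided sharpness special to alternating knots must be used in an essential way.
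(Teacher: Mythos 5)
The paper itself contains no proof of this statement: it is quoted as Greene's theorem with a citation to \cite{G1} (\emph{Lattices, graphs, and Conway mutation}), so the only meaningful comparison is against that reference. Your outline reproduces the architecture of Greene's argument faithfully: Tait graphs and their Goeritz/flow lattices, sharpness of the two checkerboard fillings of $M(K,2)$ (due to Ozsv\'ath--Szab\'o for alternating links), the dictionary between Whitney flips and Conway mutations, and the reduction of the whole theorem to showing that $M(K,2)\cong M(K',2)$ forces an isometry of Goeritz lattices.

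However, as a proof the proposal has two genuine gaps, and they are exactly where the content of Greene's paper lies. First, your phrase ``isomorphic cycle matroids --- equivalently, isomorphic flow (Goeritz) lattices'' is not Whitney's theorem: the implication you actually need, that an isometry of flow lattices yields an isomorphism of cycle matroids (a discrete Torelli theorem), is a separate nontrivial result, which Greene obtains by characterising the cut/vertex vectors intrinsically inside a graph lattice via irreducibility. Second, and more seriously, the ``lattice-uniqueness analysis'' is asserted in a form that is both unproven and too general: you posit that any two sharp definite lattices of equal rank realising the same correction terms and discriminant form must be isometric, but nothing in your setup even guarantees the ranks agree, and no such general rigidity principle for definite lattices is available. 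What actually works is special to graph lattices: one glues the positive-definite sharp filling coming from $K$ to the orientation-reversed negative-definite sharp filling coming from $K'$, obtaining a closed definite $4$-manifold; Donaldson's diagonalisation theorem then embeds the two Goeritz lattices as complementary full-rank sublattices of $\ZZ^N$; and sharpness together with the decomposition properties of graph lattices (again via their irreducible vertex vectors) forces the desired isometry. So the strategy you describe is the right one --- indeed it is Greene's --- but the two decisive theorems are named rather than proved, and the second is mis-stated in a generality in which it would fail.
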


Greene conjectured something even more surprising, that is that a $2$-twin of
a prime alternating knot must be alternating.

\subsection{Presence of symmetries as an indicator of the existence of twins}

We have seen that the existence of a twin for a knot may imply that the knot is
symmetric. In some instances the presence of symmetries is enough to ensure
that the knot has a twin. A special class to consider is that of 
\emph{tunnel-number-$1$ knots}. These are the knots whose exteriors admit a
Heegaard splitting of genus $2$. Because of this, as surfaces of genus $2$ are
hyperelliptic, they are all strongly invertible. We have the following
\cite{JP}.

\begin{Theorem}[Jang-P]
Let $K$ be a tunnel-number-$1$ knot. If $K$ satisfies one of the following
conditions, it has a $2$-twin:
\begin{itemize}
\item the bridge index of $K$ is $\ge 5$.
\item the bridge index of $K$ is $4$ and $K$ is a $(1,1)$-knot\footnote{A
\emph{$(1,1)$-knot} is a knot that can be put in $1$-bridge position with
respect to a genus-$1$ Heegaard splitting of the $3$-sphere. Torus knots and
$2$-bridge knots are examples of $(1,1)$-knots.} (or $M(K,2)$ has Heegaard 
genus equal to $2$).
\end{itemize}
\end{Theorem}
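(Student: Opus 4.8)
The plan is to prove that tunnel-number-$1$ knots satisfying the bridge-index conditions admit $2$-twins by exploiting their strong invertibility together with the dictionary between $2$-twins and quotient objects established implicitly in the discussion of the $n=2$ case. Since $K$ is tunnel-number-$1$, its exterior has a genus-$2$ Heegaard splitting, and the hyperelliptic involution of the genus-$2$ surface provides a strong inversion $\tau$ of $K$ (an orientation-preserving involution of $(\S^3,K)$ whose fixed-point set is an unknotted circle meeting $K$ in two points). The key point is that a strong inversion lifts to $M(K,2)$ and assembles with the covering involution $\psi$ into a larger symmetry group. The quotient of $(\S^3,K)$ by $\tau$ is a tangle decomposition of the $3$-sphere, and the image of $K\cup{\mathrm{Fix}}(\tau)$ is a spatial graph (a theta-curve or a two-component object, depending on the combinatorics), exactly the kind of quotient data that controls $2$-twins in the hyperbolic and JSJ cases discussed above.

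First I would make precise the quotient graph $G\subset\S^3$ obtained from the strong inversion, and identify $K$ as one of the knots recovered by re-assembling $G$ together with the branching data. The strategy is then the same local replacement principle that underlies Nakanishi--Sakuma's method and Conway mutation: one produces a second knot $K'$ from the same quotient graph by modifying the way the branching is read off (for instance, by swapping the two edges of the theta-curve that play the role of the branching locus, or by applying a mutation supported on a Conway sphere arising from the fixed-point set). Because $M(K,2)$ depends only on the pair (quotient manifold, branching locus) up to the symmetry, and not on the particular labelling we chose, one gets $M(K,2)\cong M(K',2)$ by construction. The role of the bridge-index hypotheses is to guarantee, via the combinatorics of the tunnel-number-$1$ structure and the $(1,1)$ or Heegaard-genus-$2$ condition, that the resulting $K'$ is genuinely distinct from $K$; a low bridge index would force $K'$ and $K$ to coincide (e.g. $2$-bridge knots are determined by $M(K,2)$).

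The steps I would carry out, in order, are: (i) extract the strong inversion $\tau$ from the hyperelliptic involution of the genus-$2$ Heegaard surface and verify that its fixed-point set is unknotted and meets $K$ in two points; (ii) form the quotient orbifold/graph and describe the branching locus combinatorially in terms of a genus-$2$ handlebody picture or a $(1,1)$-decomposition; (iii) define the candidate twin $K'$ by the appropriate local modification of the branching data and check that $M(K',2)\cong M(K,2)$ using the characterisation of $M(-,2)$ from the Proposition together with the equivariant structure; (iv) use the bridge-index and $(1,1)$/Heegaard-genus hypotheses to certify that $K'\not\simeq K$, by showing the modification changes the knot type. I would handle the two bullet cases in parallel, with the bridge-index-$4$ case requiring the extra $(1,1)$ or Heegaard-genus-$2$ input at step (iv).

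The hard part will be step (iv): showing that the constructed $K'$ is \emph{not} equivalent to $K$. Producing a knot with the same double branched cover is essentially formal once the strong inversion is in hand, but the coincidence $M(K,2)\cong M(K',2)$ does not by itself separate the knots, precisely because $M(K,2)$ is a weak invariant at $n=2$ (as the whole section documents). The bridge-index bound is exactly what supplies the obstruction, so the crux is to translate the hypothesis ``bridge index $\ge 5$'' (respectively $4$ with the $(1,1)$ or genus-$2$ condition) into a genuine inequality of knot invariants that the modification alters. I expect this to require a careful analysis of how the bridge number behaves under the local move and a comparison of the two $(1,1)$- or tunnel-number-$1$ presentations, invoking results that bound bridge index in terms of the Heegaard or branched-cover data.
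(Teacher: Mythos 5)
These lecture notes do not actually prove this theorem --- it is stated with a citation to \cite{JP} --- so I am judging your outline against the argument such a proof has to supply. Your steps (i)--(ii) are the right starting point and match the set-up in the notes: tunnel-number-one knots are strongly invertible because genus-$2$ surfaces are hyperelliptic, and one passes to the quotient theta-curve $\theta=k\cup a_1\cup a_2$, where $k$ is the image of $K$ and $a_1\cup a_2$ is the image of the axis (an unknot, by the Smith conjecture). The genuine gap is your step (iii), which you declare ``essentially formal''. It is not, and it is exactly where the tunnel-number-one hypothesis, as opposed to mere strong invertibility, must be used. What is formal is only this: $M(K,2)$ is a double cover of $\Sigma_2(k\cup a_i)$, the double branched cover of a constituent knot of $\theta$, branched over the lift of the remaining edge. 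This produces a knot \emph{in} $\S^3$ --- hence a $2$-twin candidate --- only if $k\cup a_i$ is \emph{unknotted}; otherwise the lift of the remaining edge lies in a manifold different from $\S^3$ and yields nothing. Your proposal never addresses why a second constituent of $\theta$ should be trivial, and for a general strong inversion it need not be: every $2$-bridge knot is strongly invertible (and tunnel number one), yet by Hodgson--Rubinstein \cite{HR} it has no $2$-twin, so no formal ``swap of the branching data'' can exist. The missing idea, which is where the proof in \cite{JP} does its work, is the tunnel: isotope the unknotting tunnel $t$ to be equivariant under the strong inversion and analyse how it meets the axis; when $t$ lies on the axis, its image is one of the arcs $a_1$, the image of the spine $K\cup t$ of the genus-$2$ Heegaard handlebody is precisely the constituent $k\cup a_1$, and the Heegaard structure forces the complement of $k\cup a_1$ to be an open ball, i.e. $k\cup a_1$ is unknotted. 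You never mention the tunnel at all; an argument using only strong invertibility cannot prove this theorem. Your fallback via Conway mutation is also a dead end: tunnel-number-one knots admit no essential tangle decompositions (Gordon--Reid), hence no essential Conway spheres, so mutation cannot produce $2$-twins here.

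You also invert where the difficulty lies. Once a candidate $K'$ is correctly constructed, non-equivalence is the easy step, precisely because the construction outputs a knot of small bridge number. In the second bullet, the hypothesis that $M(K,2)$ has Heegaard genus $2$ (this is what the $(1,1)$ condition is for, since a $(1,1)$-knot has $M(K,2)$ of Heegaard genus at most $2$) is itself the \emph{construction} tool, not a distinguishing tool: by Birman--Hilden, the hyperelliptic involution of a genus-$2$ splitting of $M(K,2)$ has quotient $\S^3$ with branch set a $3$-bridge link, necessarily a knot $K'$ since $M(K,2)$ is a $\ZZ/2\ZZ$-homology sphere, and then $b(K')\le 3<4=b(K)$ concludes. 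Similarly, the theta-curve construction behind the first bullet yields a twin whose bridge number is small enough that $b(K)\ge 5$ rules out $K'\simeq K$. So the bridge-index hypotheses are indeed consumed in the comparison of bridge numbers, as you guessed, but they are compared against a bound that the construction itself provides, not extracted by analysing how bridge number behaves under a local move. In short: your outline reproduces the setting correctly, but the two pillars of the proof --- the equivariant unknotting tunnel producing a second unknotted constituent knot, and the Birman--Hilden hyperelliptic involution in the Heegaard-genus-$2$ case --- are absent.
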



\section{Rigidity results}

Although $2$-fold branched coverings seem too flexible to be reasonable 
invariants for knots, sometimes they are enough to determine them.

\begin{Theorem}[Hodgson-Rubinstein]
Let $K$ be a $2$-bridge knot. Then $K$ has no $2$-twins, that is $M(K,2)$
determines $K$.
\end{Theorem}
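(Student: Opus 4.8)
The plan is to recall what the $2$-fold branched covering of a $2$-bridge knot looks like and then invoke rigidity of the relevant geometric structure. A $2$-bridge knot $K=K(p,q)$ has the property that $M(K,2)$ is the lens space $L(p,q)$; this is the classical description of $2$-fold branched covers of $2$-bridge (rational) knots. So the statement reduces to understanding when two $2$-bridge knots can give homeomorphic lens spaces, together with ruling out that some non-$2$-bridge knot $K'$ shares the same double cover. First I would establish the lens-space identification and then use the complete classification of lens spaces up to homeomorphism (the Reidemeister--Franz--Brody classification: $L(p,q)\cong L(p,q')$ if and only if $q'\equiv\pm q^{\pm 1}\pmod p$) alongside Schubert's classification of $2$-bridge knots (with the analogous equivalence $K(p,q)\cong K(p,q')$ iff $q'\equiv q^{\pm 1}\pmod p$). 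Matching these two equivalence relations is where the combinatorial heart of the ``no $2$-twin among $2$-bridge knots'' part lives.

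The more serious obstacle is that the definition of $2$-twin quantifies over \emph{all} knots $K'$, not just $2$-bridge ones, so I must exclude the possibility that a knot $K'$ of higher bridge index also satisfies $M(K',2)\cong L(p,q)$. Here is where I would bring in genuine rigidity: any covering involution on $L(p,q)$ with quotient $\S^3$ and with a circle as fixed-point set realises a $2$-bridge presentation of its branch knot, so the problem becomes classifying the conjugacy classes of such involutions on a fixed lens space. The key input is that involutions on lens spaces are essentially unique up to conjugacy in the relevant sense — this is the Hodgson--Rubinstein rigidity statement about the isotopy/conjugacy classes of involutions on lens spaces (building on equivariant Dehn's lemma, the equivariant sphere and torus theorems, and the geometrisation of Seifert-fibred and spherical-space-form involutions). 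Using the Smith-conjecture input (Theorem~\ref{ThmSC}) to force the fixed-point set to be an unknotted circle downstairs, one concludes that any such $K'$ is already $2$-bridge, reducing back to the lens-space/Schubert matching above.

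Carrying this out, I would: (i) prove $M(K,2)\cong L(p,q)$ for $K=K(p,q)$, for instance via the Seifert-surface/cut-and-glue description from Section~2 or via the rational-tangle/Montesinos viewpoint, noting that the double cover of a rational tangle is a solid torus so gluing rational tangles upstairs produces a genus-$1$ Heegaard splitting; (ii) classify the covering involutions on $L(p,q)$ with circle fixed-point set and $\S^3$ quotient, showing each recovers a $2$-bridge knot; (iii) translate the homeomorphism $M(K,2)\cong M(K',2)$ into an equivalence of lens-space data and then, via Schubert, into equivalence of the knots. The single hardest step is (ii), the rigidity/uniqueness of involutions on the lens space up to conjugacy, since it must rule out \emph{exotic} branch sets arising from involutions that are not conjugate to the standard one; everything else is bookkeeping between the two classification theorems. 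I would expect the proof to lean on the fact that a lens space is geometrically rigid (spherical space form) so that, after isotoping to an isometry, the finite-group/Sylow analysis of the isometry group pins down the involution, mirroring the strategy already used in the hyperbolic case described in the text.
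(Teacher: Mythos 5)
Your proposal takes essentially the same route as the paper, whose entire proof is the one-line citation of Hodgson and Rubinstein's classification of involutions on lens spaces \cite{HR}: identify $M(K,2)$ with the lens space $L(p,q)$, use that classification to see that any covering involution on a lens space with circle fixed-point set and quotient $\S^3$ is standard (so any knot sharing this double cover is itself $2$-bridge), and then match the lens-space classification against Schubert's classification of $2$-bridge knots. The only blemish is your appeal to Theorem~\ref{ThmSC} (the Smith conjecture) to make the downstairs branch set unknotted, which is both unneeded and inapplicable here since the covering manifold is a lens space rather than $\S^3$; the reduction to $2$-bridge knots comes entirely from the Hodgson--Rubinstein rigidity you correctly single out as the crux.
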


This follows from Hodgson and Rubinstein's classification of involutions on
lens spaces in \cite{HR}.

In the case of torus knots, a careful combinatorial analysis of the Seifert 
invariants of their cyclic branched coverings, which are Brieskorn manifolds 
and were classified by Neumann in his Ph.D. thesis, gives the following result.

\begin{Proposition}
A torus knot cannot be the twin of another torus knot.
\end{Proposition}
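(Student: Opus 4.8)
The plan is to use the explicit Seifert structure of the branched coverings, which for torus knots are Brieskorn manifolds. First I would recall that $M(T(p,q),n)$ is the Brieskorn manifold $\Sigma(n,p,q)$, the link of the singularity $z_1^{\,n}+z_2^{\,p}+z_3^{\,q}=0$, and that it carries a Seifert fibration over $S^2$ whose invariants were computed by Neumann. Writing $\ell=\mathrm{lcm}(n,p,q)$ and using $\gcd(p,q)=1$, the exceptional fibres organise into three families coming from the coordinate axes: one fibre of multiplicity $n/(\gcd(n,p)\gcd(n,q))$, then $\gcd(n,q)$ fibres of multiplicity $p/\gcd(n,p)$, and $\gcd(n,p)$ fibres of multiplicity $q/\gcd(n,q)$ (fibres of multiplicity $1$ being regular). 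The base orbifold is the sphere carrying the corresponding cone points, and the Euler number is determined as well. The crucial point is that the map sending $(n,p,q)$ to this Seifert data is explicit and, for fixed $n$, can be inverted to recover the unordered pair $\{p,q\}$.

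Next I would pass to the homeomorphism type. By the classical uniqueness of the Seifert fibration (valid for closed orientable Seifert fibred spaces that are neither lens spaces nor members of the short list of spherical or flat exceptions), the multiset of exceptional multiplicities, the genus of the base, and the Euler number are invariants of $\Sigma(n,p,q)$ up to orientation-preserving homeomorphism. Since $T(p,q)$ is prime, Lemma~\ref{lCK} guarantees that $\Sigma(n,p,q)$ is prime, so no connected-sum ambiguity intervenes. Assuming $M(T(p,q),n)\cong M(T(p',q'),n)$ for the same $n$, the two Brieskorn manifolds share the same Seifert data; reading off the multiplicities together with the number of fibres carrying each of them, and using the known value of $n$, I would solve for $\gcd(n,p),\gcd(n,q)$ and the residues $p/\gcd(n,p),q/\gcd(n,q)$, thereby recovering $\{p,q\}=\{p',q'\}$. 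In the pairwise coprime case $\gcd(n,pq)=1$ this is transparent: the manifold is a Brieskorn homology sphere with exactly three exceptional fibres of orders $n,p,q$ (pairwise distinct since all are $\ge 2$ and coprime), and deleting the value $n$ from this multiset returns $\{p,q\}$.

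The main obstacle is precisely the degenerate cases excluded above, where the Seifert fibration is not unique and the multiplicities are not immediately legible from the homeomorphism type. There are two sources. When at most two exceptional fibres survive — for instance $n=2$ and $T(2,q)$, where $\Sigma(2,2,q)$ is a lens space — I would instead invoke the classification of lens spaces, $L(a,b)\cong L(a,b')$ iff $a=a'$ and $b'\equiv\pm b^{\pm1}\pmod a$, and check that for fixed $n$ no two distinct torus knots yield the same oriented lens space. When the base orbifold is spherical, i.e. the multiplicity triple is platonic, $(2,2,k),(2,3,3),(2,3,4),(2,3,5)$, the manifold is a spherical space form and only finitely many pairs $(T(p,q),n)$ arise; these I would dispatch directly using the classification of elliptic Seifert fibred spaces. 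Establishing injectivity of the data map in these finitely many degenerate situations — together with fixing the orientation convention, so that the chirality of torus knots is matched by the non-amphichirality of the corresponding Brieskorn manifolds — is the part that genuinely requires case analysis; the hyperbolic-base-orbifold case, which is generic, follows at once from uniqueness of the fibration.
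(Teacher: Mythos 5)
Your strategy --- identify $M(T(p,q),n)$ with the Brieskorn manifold $\Sigma(n,p,q)$, read off its Seifert invariants following Neumann, and invert the resulting data map for each fixed $n$, handling the manifolds with non-unique Seifert fibrations separately --- is exactly the route the paper indicates. But your description of the invariants contains a genuine error that invalidates the argument in infinitely many cases: the base of the Seifert fibration of $\Sigma(n,p,q)$ is \emph{not} always a sphere. Your formulas for the multiplicities and for the numbers of exceptional fibres are correct, but the base is a closed orientable surface of genus $\tfrac{1}{2}(\gcd(n,p)-1)(\gcd(n,q)-1)$, which is positive whenever $n$ has a non-trivial common factor with \emph{both} $p$ and $q$. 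In the extreme case $n=pq$ there are no exceptional fibres at all: $\Sigma(pq,p,q)$ is the circle bundle of Euler number $-1$ over the closed surface of genus $(p-1)(q-1)/2$ (for the trefoil and $n=6$ this is the Nil-manifold $\Sigma(2,3,6)$, not a Seifert fibration over $S^2$).

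To see that this sinks the proof as written, take $n=30$ and the torus knots $T(2,15)$, $T(3,10)$, $T(5,6)$. By your recipe all three covers have genus-zero base, no exceptional fibres, and Euler number $-1$, so your data map cannot separate them; worse, that data describes the Euler number $-1$ circle bundle over $S^2$, i.e.\ it would force each cover to be $\S^3$, contradicting Theorem~\ref{ThmSC}. In reality the three manifolds are circle bundles of Euler number $-1$ over surfaces of genus $7$, $9$ and $10$: they are distinguished precisely by the base genus, the invariant your data omits. Consequently your list of degenerate cases (at most two exceptional fibres; platonic triples) misses the whole positive-genus family, and even in the generic case the inversion must read off the genus together with the multiplicities, their counts, and the Euler number. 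The approach is salvageable --- with the genus included the corrected data map is injective for fixed $n$; for instance in the family $n=pq$ the genus determines $p+q$, which together with $pq=n$ recovers $\{p,q\}$ --- but as it stands the proposal asserts false Seifert data and fails on explicit examples.
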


\begin{Corollary}
Let $K$ be a torus knot and $n\ge 3$. Then $K$ is determined by $M(K,n)$.
\end{Corollary}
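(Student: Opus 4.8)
The plan is to reduce everything to the Proposition just proved (a torus knot cannot be the twin of another torus knot) by showing that any knot $K'$ with $M(K',n)\cong M(K,n)$ must itself be a torus knot. First I would note that a torus knot is prime, so by Lemma~\ref{lCK} the manifold $M(K,n)$ is prime; applying the same lemma to the homeomorphic manifold $M(K',n)$ shows that $K'$ is prime as well. Next, by Remark~\ref{rkGeo} the hypothesis that $K$ is a torus knot means $M(K,n)$ is Seifert fibred, and hence so is $M(K',n)$. The whole strategy is then to run through the trichotomy of Remark~\ref{rkGeo} applied to the prime knot $K'$ and to discard every possibility except ``torus knot''.

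Two of the three cases fall out immediately. If $K'$ were a satellite knot, then $M(K',n)$ would admit a nontrivial JSJ decomposition and in particular would fail to be Seifert fibred, contradicting the previous paragraph. If $K'$ were hyperbolic, then for $n\ge 4$, or for $n=3$ with $K'$ not the figure-eight knot, Remark~\ref{rkGeo} would force $M(K',n)$ to be hyperbolic, again contradicting Seifert fibredness. The only possibility that survives this sweep is $n=3$ together with $K'$ equal to the figure-eight knot, and this residual case is where I expect the genuine work to lie.

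The hard part will be disposing of this exceptional case, which I would do on geometric grounds. The point is that $M(4_1,3)$ is the closed flat (Euclidean) $3$-manifold underlying the figure-eight orbifold with cone angle $2\pi/3$ -- precisely the borderline case of Thurston's orbifold theorem that Remark~\ref{rkGeo} singles out. On the other hand, since $K=T(p,q)$, the manifold $M(K,3)$ is the Brieskorn manifold $\Sigma(p,q,3)$, which is Seifert fibred over a base orbifold of nonzero Euler number and therefore carries spherical, $\mathrm{Nil}$, or $\widetilde{\mathrm{SL}}_2$ geometry, but never Euclidean geometry. Equivalently, $\pi_1(M(4_1,3))$ is virtually $\ZZ^3$, whereas none of the relevant Brieskorn fundamental groups is virtually abelian. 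Hence $M(K,3)\not\cong M(4_1,3)$, and this case cannot occur.

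Having excluded satellite and hyperbolic knots, I conclude that $K'$ is necessarily a torus knot. At that point the two knots $K$ and $K'$ are both torus knots with $M(K,n)\cong M(K',n)$, so the Proposition above yields $K'\cong K$, which is exactly the assertion that $M(K,n)$ determines $K$. The only subtle ingredient in the whole argument is the Euclidean-versus-$\mathrm{Nil}$ (or, more crudely, the virtually-$\ZZ^3$-versus-not) distinction used to kill the figure-eight case; everything else is a direct reading of Remark~\ref{rkGeo} and Lemma~\ref{lCK}.
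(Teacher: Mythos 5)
Your proof is correct and is essentially the argument the paper intends but leaves implicit: reduce to the Proposition by using Lemma~\ref{lCK} and the trichotomy of Remark~\ref{rkGeo} to show any $n$-twin of a torus knot with $n\ge 3$ is again a torus knot, with the only genuine work being the figure-eight/$n=3$ exception, which you correctly kill because $M(4_1,3)$ is Euclidean while $M(T(p,q),3)=\Sigma(p,q,3)$ is a Seifert fibration of nonzero Euler number (indeed with base orbifold $S^2(p,q,3)$ never Euclidean for coprime $p,q$) and so carries spherical, $\mathrm{Nil}$ or $\widetilde{\mathrm{SL}}_2$ geometry, never flat. Two small patches: you should also exclude $K'$ trivial (immediate from Theorem~\ref{ThmSC}, since otherwise $M(K,n)\cong\S^3$ would force $K$ trivial), and your parenthetical claim that no relevant Brieskorn group is virtually abelian fails in the spherical case where $\pi_1$ is finite (hence virtually trivial) --- the right statement is the one you give afterwards, namely that none of them is virtually $\ZZ^3$.
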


This means that torus knots can only have $n$-twins (which are Montesinos
knots) for $n=2$ and this can only happen if they are not of type $T(2,2a+1)$.
Note that in a couple of peculiar cases, the Montesinos knot is not hyperbolic  
and coincides with the torus knot which thus has no $2$-twin.

We have seen that an arbitrary prime knot is determined by a family of three
cyclic branched coverings, provided their degrees are odd prime numbers. If 
this requirement can be improved for torus knots as mentioned above, for 
hyperbolic knots one can relax the requirement on the degrees or reduce the 
size of the family by imposing conditions on the degrees. Indeed we have the 
following results.

\begin{Theorem}[P., Zimmermann]
Let $F\subset\NN\setminus\{0,1\}$ and let $K$ be a hyperbolic knot. The family
$(M(K,n))_{n\in F}$ determines $K$ if one of the following conditions is
satisfied.
\begin{enumerate}
\item The cardinality of $F$ is $3$ \cite{P2}.
\item $F=\{n,m\}$ and 
\begin{enumerate}
\item $n,m\ge 3$ and are not coprime \cite{Z2};
\item $n>m=2$ and 
\begin{itemize}
\item $K$ is a Montesinos knot \cite{P} or 
\item $M(K,2)$ is hyperbolic and $n$ is even \cite{Z2} or
\item $K$ has a unique Seifert surface, for instance $K$ is fibred \cite{Pa}.
\end{itemize}
\end{enumerate}
\item $F=\{n\}$, $n\ge 3$, and $K$ is alternating \cite{P4}.
\end{enumerate}
\end{Theorem}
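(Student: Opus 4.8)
The statement collects results from several papers, so my plan is to isolate a common geometric framework and then treat each condition with the tool appropriate to it. The unifying principle is that, for a hyperbolic knot $K$ and $n\ge 3$, Remark~\ref{rkGeo} (via Thurston's orbifold theorem) makes $M(K,n)$ hyperbolic, with the single exception of the figure-eight knot at $n=3$, which I would dispose of by hand. Once $M(K,n)$ is hyperbolic, Mostow rigidity lets me take the covering transformation $\psi$ to be an isometry, so that $\langle\psi\rangle$ sits inside the \emph{finite} group $G=\mathrm{Isom}(M(K,n))$. By the characterisation Proposition together with Zimmermann's theorem, an $n$-twin $K'$ of $K$ corresponds to a second order-$n$ cyclic subgroup $\langle\psi'\rangle\le G$ whose generator has circle fixed-point set and $\S^3$ quotient; moreover, when such a twin exists it is unique, by the positive solution of Smith's conjecture. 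The problem thus reduces to counting how many mutually compatible subgroups of this kind $G$ can accommodate.

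For the conditions involving only degrees $\ge 3$ --- namely $|F|=3$ and $F=\{n,m\}$ with $n,m\ge 3$ non-coprime --- I would argue entirely inside $G$. Zimmermann's analysis shows that a twin forces $\psi$ and a suitable conjugate of $\psi'$ to commute, producing a $\ZZ/n\ZZ\times\ZZ/n\ZZ$-action, that is, precisely the Nakanishi--Sakuma configuration with its associated two-component quotient link $L$. When two degrees share a prime $p\mid\gcd(n,m)$, a Sylow-$p$ argument identifies the two period configurations, so that a single knot $K'$ twinning $K$ at both $n$ and $m$ must come from one and the same link $L$; this over-determines $L$ and forces $K'=K$. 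For $|F|=3$ the same mechanism, together with the finiteness of the symmetry group of $K$ --- which bounds the number of periods and hence of twinning degrees --- shows that no single $K'$ can twin $K$ at all three degrees.

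The remaining conditions pair the rigid degree-$n$ information ($n\ge 3$) with the flexible degree-$2$ covering, where $M(K,2)$ may be Seifert fibred or toroidal and the relevant symmetry is a strong involution rather than a period. Here the strategy is to use $M(K,n)$ to pin $K$ down up to its (possibly several) $2$-twins, and then to remove that residual ambiguity using the extra geometric hypothesis: Hodgson--Rubinstein-type rigidity together with the Seifert-invariant analysis for Montesinos knots \cite{P}; the Sylow-$2$ structure when $M(K,2)$ is itself hyperbolic and $n$ is even \cite{Z2}; or uniqueness of the Seifert surface in the fibred case \cite{Pa}. The single-covering alternating case has a different flavour: by Greene's theorem any $2$-twin of a prime alternating knot arises from Conway mutation, and one checks that mutation is incompatible with the constraints a single degree-$n$ covering ($n\ge 3$) imposes \cite{P4}.

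The main obstacle throughout is the degree-$2$ covering. For $n\ge 3$ everything is governed by a finite isometry group and Mostow rigidity, so the combinatorics of cyclic subgroups does the work; but at $n=2$ the covering is geometrically flexible, the symmetry is an involution rather than a period, and one cannot argue inside a single rigid group. Matching the strong-involution structure seen by $M(K,2)$ with the period structure seen by $M(K,n)$, and showing that the match is rigid enough to recover $K$, is exactly where each two-element case containing $2$ requires its own dedicated argument.
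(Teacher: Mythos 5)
The paper states this theorem as a survey compilation of the cited results and gives no proof of its own; what it does supply is the mechanism behind the $n\ge 3$ cases (hyperbolicity of $M(K,n)$ via the orbifold theorem modulo the figure-eight exception, Mostow rigidity placing the covering transformations in a finite isometry group, Sylow arguments forcing commuting periods, hence Nakanishi--Sakuma configurations, with uniqueness of the twin from Smith's conjecture), and your sketch reproduces that framework faithfully for cases 2(a) and the general setup. However, two of your cases rest on arguments that would not work. For case 3 ($F=\{n\}$, $n\ge 3$, $K$ alternating) you invoke Greene's theorem, but that theorem is about $2$-twins, i.e.\ about $M(K,2)$, and in this case no degree-$2$ cover is in play: the hypothetical twin $K'$ is an $n$-twin for a single $n\ge 3$, it need not a priori be alternating, and by Zimmermann's theorem the mechanism producing it is the period/quotient-link construction, not Conway mutation. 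Greene's statement is simply inapplicable; the cited result \cite{P4} must instead rule out (or identify as giving back $K$) a period of order $n$ with trivial quotient on an alternating hyperbolic knot, which is a different argument entirely.

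For case 1 ($|F|=3$), your justification --- that finiteness of the symmetry group of $K$ ``bounds the number of periods and hence of twinning degrees'' --- is not a valid deduction. The symmetry group of a hyperbolic knot is finite, but it can be cyclic of arbitrarily large order, so the knot can perfectly well carry periods of many distinct orders; finiteness alone bounds nothing relevant here, and indeed two twinning degrees do occur (the coprime examples of \cite{Z1}), so whatever argument you give must fail at two degrees and succeed at three. What the cited papers actually prove is the finer statement that a single knot $K'$ cannot twin $K$ in three distinct degrees: for degrees $\ge 3$ this reduces, via the commuting-periods analysis, to the fact that a nontrivial knot cannot admit periods with trivial quotient of too many pairwise distinct orders (compare the Boileau--Paoluzzi theorem quoted at the end of the paper: three such periods of pairwise distinct orders $>2$ force the knot to be trivial), while the possibility $2\in F$ requires the separate strong-involution analysis that you correctly identify as the hard part but do not supply. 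As written, your proposal would ``prove'' too much --- the same hand-waved finiteness argument would exclude two twinning degrees as well, contradicting known examples --- which is the clearest sign the mechanism is not the right one.
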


This result is best possible as for every pair of integers $n>m$ one can 
construct non-equivalent hyperbolic knots $K$ and $K'$ that are $n$-twins and
$m$-twins provided that one of the following conditions hold:

\begin{enumerate}
\item $m\ge 3$ and $m$ and $n$ are coprime \cite{Z1};
\item $m=2$ \cite{P,Pa}.
\end{enumerate}

Keeping in mind the construction to build pairs of $n$-twins, one realises that
to obtain pairs of knots that are $n$-twins and $m$-twins for two coprime
integers $n$ and $m$, one can start with a three-component (hyperbolic) link 
such that all of its components are trivial, every two-component sublink is a 
Hopf link, and such that its three components can be cyclically permuted but no 
two can be exchanged. The knots are obtained as lifts of the third component of
the link in the $\ZZ/n\ZZ \times \ZZ/m\ZZ$ and $\ZZ/m\ZZ\times \ZZ/n\ZZ$
branched coverings of the other two.


\section{A different point of view and an application}

We have seen that a manifold can be the $n$-fold cyclic branched covering of
arbitrarily many $n$-twin knots. One might now ask whether there is a universal
bound on the number of possible degrees $n$ such that a manifold $M$ not
homeomorphic to $\S^3$ is the $n$-fold branched covering of some knot in the 
$3$-sphere. The answer to this question is negative in general, however it is
possible to provide bounds by imposing conditions on the geometry of the
manifolds or on the degrees \cite{BFM}.

\begin{Theorem}[Boileau-Franchi-Mecchia-P.-Zimmermann]
\begin{itemize}
\item A hyperbolic manifold is the branched covering of some knot for at most
nine different degrees.
\item A manifold not homeomorphic to $\S^3$ is the branched covering of some 
knot for at most six odd prime degrees.
\end{itemize}
\end{Theorem}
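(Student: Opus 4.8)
The plan is to prove the two items separately; both rely on the equivariant characterisation of $n$-fold cyclic branched coverings recorded in the Proposition, combined with the positive solution of Smith's conjecture in the form of Theorem~\ref{ThmSC}.

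\emph{Proof of the first item.} Suppose $M$ is hyperbolic and $M\cong M(K,n)$ for some knot $K$. By the Proposition, $M$ admits a periodic diffeomorphism $\psi$ of order $n$ with $\mathrm{Fix}(\psi^k)\cong\S^1$ for all $k$ and $M/\langle\psi\rangle\cong\S^3$. By Mostow rigidity $\psi$ is isotopic to an isometry, and realising each such symmetry isometrically places all of them inside the single finite group $G=\mathrm{Isom}^+(M)$; there $C_n=\langle\psi\rangle$ is a cyclic group of order $n$ all of whose non-trivial elements fix pointwise one common closed geodesic $\gamma_n=\mathrm{Fix}(\psi)=\tilde K$, about which they rotate. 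Thus every degree $n$ realising $M$ as a branched covering singles out a geodesic ``rotation axis'' $\gamma_n$ in $M$.

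The first step is to show that a given axis can serve at most one degree, which reduces the problem to counting axes. Fix a geodesic $\gamma$ and let $C\cong\ZZ/N\ZZ$ be the (cyclic) group of orientation-preserving isometries in $G$ fixing $\gamma$ pointwise. Suppose two subgroups $C_d\subset C_{d'}\subset C$, with $d\mid d'$ and $d<d'$, both have quotient $M/C_d\cong M/C_{d'}\cong\S^3$. The quotient map $M\to M/C_d\cong\S^3$ is branched over a knot $K=p(\gamma)$, so $M\cong M(K,d)$, and since $M$ is hyperbolic Theorem~\ref{ThmSC} forces $K$ to be non-trivial. On the other hand the residual rotation $C_{d'}/C_d\cong\ZZ/(d'/d)\ZZ$ about $K$ realises $M/C_{d'}\cong M(K,d'/d)$ with $d'/d\ge2$, so from $M/C_{d'}\cong\S^3$ Theorem~\ref{ThmSC} now forces $K$ to be trivial, a contradiction. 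Hence two distinct degrees must come from two distinct axes, and it suffices to bound the number of geodesics about which the maximal rotation group in $G$ has spherical quotient.

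Bounding the number of such axes inside the finite group $G$ is the heart of the argument. Following the pattern of Zimmermann's theorem, given two rotation groups about distinct axes I would perform a Sylow $p$-subgroup analysis and use that fixed geodesics are preserved by the isometries normalising them; after passing to suitable conjugates this either forces the two cyclic groups to commute---locally reproducing the Nakanishi--Sakuma $\ZZ/n\ZZ\times\ZZ/m\ZZ$ configuration, whose quotient is a two-component link in $\S^3$---or constrains the axes to a very rigid incidence pattern in the orbifold $M/G$. A finite classification of the configurations of spherical-quotient axes that can coexist in one closed hyperbolic $3$-manifold then produces the numerical bound nine. I expect this classification to be the principal obstacle: the commuting case is controlled by the arithmetic of abelian coverings, but excluding large non-commuting families requires tight control of the embeddings $C_n\hookrightarrow G$ and of the branch locus of $M/G$.

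\emph{Proof of the second item.} Here $M\not\cong\S^3$ need not be hyperbolic, so I would first stratify by the geometry of $M$ using Remark~\ref{rkGeo}. For an odd prime degree $n$ and $M\cong M(K,n)$, the knot $K$ is a torus knot precisely when $M$ is Seifert fibred, a satellite knot precisely when $M$ is toroidal, and hyperbolic precisely when $M$ is hyperbolic (the figure-eight exception at $n=3$ does not yield a hyperbolic $M$). If $M$ is hyperbolic, the first item already bounds the number of degrees, and restricting to odd primes streamlines the Sylow analysis and lowers the count. If $M$ is Seifert fibred, then $M\cong M(T(a,b),n)$ is the Brieskorn manifold $\Sigma(a,b,n)$, whose Seifert invariants determine the unordered triple of exceptional orders; since this triple is essentially fixed by $M$, the degree $n$ can take only boundedly many values. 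If $M$ is toroidal, the covering transformation of order $n$ must respect the JSJ decomposition of $M$, and analysing its action on the JSJ pieces bounds the degrees for which $M$ occurs as a satellite covering. Assembling the three strata and taking the worst case yields at most six odd prime degrees; I expect reconciling the hyperbolic count with the Seifert fibred bookkeeping, so that the combined maximum is exactly six, to demand the most care.
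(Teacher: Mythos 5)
Your reduction steps are sound in outline: realising the covering transformations as isometries via Mostow rigidity, and using the Smith conjecture (Theorem~\ref{ThmSC}) to show that each rotation axis in $M$ can account for at most one degree, is a legitimate and standard opening move (modulo a slip of direction: the residual group $C_{d'}/C_d$ acts \emph{on} $M/C_d\cong\S^3$ fixing $K$ pointwise, so it is $M/C_d$ that branched-covers $M/C_{d'}$, not the other way round; the contradiction still comes out, since Smith's conjecture forces the fixed knot $K$ to be trivial and hence $M\cong\S^3$). Likewise the stratification of the second item by the geometry of $M$ via Remark~\ref{rkGeo} is the natural frame. But in both items the actual content of the theorem --- the numerical bounds nine and six --- is never derived. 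You write that ``a finite classification of the configurations of spherical-quotient axes \dots then produces the numerical bound nine'' and that ``assembling the three strata and taking the worst case yields at most six odd prime degrees''; these sentences assert the conclusion rather than prove it, and you yourself flag the first as ``the principal obstacle.''

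The gap is not a technicality. After your reduction, what must be bounded is the number of conjugacy classes of cyclic subgroups, inside an \emph{arbitrary} finite group $G$ of orientation-preserving diffeomorphisms (isometries in the hyperbolic case), whose elements have connected fixed-point set and whose quotient is $\S^3$. Elementary Sylow analysis in the style of Zimmermann's theorem works when comparing \emph{two} such subgroups (the twin situation), but it does not yield a universal bound valid for all finite groups $G$: one needs to control which finite groups can act this way on a ($\ZZ/2$- or $\ZZ/p$-) homology sphere, and the proof in \cite{BFM} does this by invoking the classification of finite simple groups --- as the text of these notes explicitly records (``The proof of this result relies on the classification of finite simple groups''). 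Nothing in your sketch substitutes for that input, and without it neither the bound nine nor the bound six (which, note, is \emph{not} obtained by bookkeeping over the Seifert fibred and toroidal strata --- the Brieskorn examples give only three degrees there --- but again from the finite-group-theoretic analysis) can be extracted.
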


\begin{Corollary}
A hyperbolic manifold is the branched covering of at most fifteen 
non-equivalent knots.
\end{Corollary}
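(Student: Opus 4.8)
The plan is to reduce the corollary to the degree bound in the preceding theorem together with the two ``per-degree'' twin counts already recorded above. Fix a hyperbolic manifold $M$ and suppose $M\cong M(K,n)$ for some knot $K$ and some $n\ge 2$. Since a hyperbolic manifold is prime, Lemma~\ref{lCK} forces $K$ to be prime; moreover Remark~\ref{rkGeo} shows that $K$ can be neither a torus knot (whose covers are Seifert fibred) nor a satellite knot (whose covers are toroidal), so $K$ is hyperbolic, the only exception to handle by hand being $n=3$ with $K$ the figure-eight knot. By the Proposition, each such realisation is encoded by a periodic diffeomorphism $\psi$ of $M$ of order $n$ with ${\mathrm{Fix}}(\psi^k)\cong\S^1$, $M/\langle\psi\rangle\cong\S^3$, and image of ${\mathrm{Fix}}(\psi)$ equal to $K$; by Mostow rigidity and Theorem~\ref{ThmSC} we may take $\psi$ to be an isometry, so all these actions live in the finite group $\mathrm{Isom}(M)$. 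Counting non-equivalent knots thus amounts to counting such admissible cyclic subgroups up to conjugacy in the full group $\mathrm{Isom}(M)$, which also absorbs the mirror/orientation ambiguity inherent in knot equivalence.

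First I would show that each individual hyperbolic knot $K$ realises the fixed manifold $M$ for at most one degree: the hyperbolic volume $\mathrm{vol}\,M(K,n)$ is strictly increasing in $n$, so $M(K,n)\cong M(K,m)\cong M$ forces $n=m$. Consequently the knots realising $M$ are partitioned according to the degree at which they do so, and the total count is the sum, over the occurring degrees, of the number of knots realising $M$ at that degree. The first statement of the preceding theorem bounds the number of occurring degrees by nine. For $n=2$ the manifold $M(K,2)\cong M$ is hyperbolic, so the Reni and Mecchia--Reni bound gives at most eight $2$-twins, hence at most nine mutually $2$-twin knots realising $M$ at degree $2$. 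For each degree $n\ge 3$, Zimmermann's theorem together with the positive solution of Smith's conjecture gives at most one twin, hence at most two knots realising $M$ at that degree.

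The remaining, and main, step is the bookkeeping that turns these inputs into the sharp value fifteen rather than the naive sum $9+2\cdot 8$. The point is to control how many degrees $\ge 3$ can occur simultaneously for one fixed $M$ and to pair up the corresponding twins: here I would exploit the Sylow-subgroup and geometric analysis of $\mathrm{Isom}(M)$ underlying both bullets of the preceding theorem -- in particular the bound of six odd prime degrees -- to show that the degrees $\ge 3$ contribute at most six knots in all. Combined with the nine knots possible at degree $2$, this yields the bound $9+6=15$. I expect this group-theoretic reconciliation to be the hard part: the per-degree twin counts and the degree bounds must be used in tandem, since invoking them independently overcounts, and one must still check that the figure-eight exception at $n=3$ and the orientation conventions do not produce additional conjugacy classes.
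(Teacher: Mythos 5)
Your setup is sound, and your decomposition $15=9+6$ is in fact the one behind the result in \cite{BFM}: at most nine knots can share $M$ as their $2$-fold branched cover (Reni, Mecchia--Reni), and at most six knots in total can realise $M$ at degrees $n\ge 3$. The preliminary reductions you make are all correct: $K$ must be prime by Lemma~\ref{lCK} and hyperbolic by Remark~\ref{rkGeo} (the figure-eight exception at $n=3$ is vacuous here, since its $3$-fold branched cover is Euclidean, not hyperbolic); the covering transformations can be conjugated into the finite group $\mathrm{Isom}(M)$; and the volume monotonicity argument showing that a single knot realises $M$ for at most one degree is standard.

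The genuine gap is the step you yourself flag as the hard part: the claim that all degrees $n\ge 3$ together contribute at most six knots. This is not a bookkeeping reconciliation of the two bullets of the preceding theorem --- it does not follow from them at all. Those bullets give at most nine degrees overall and at most six \emph{odd prime} degrees; combined with Zimmermann's bound of at most two knots per degree $n\ge3$, the best one can extract is $9+2\cdot 8=25$. Your suggested route via the six-odd-prime-degrees bound fares no better: it would give up to $2\cdot 6=12$ knots for odd prime degrees alone, and says nothing about composite or even degrees such as $4$, $6$, or $9$, which are constrained only by the weaker nine-degrees bound. The statement ``at most six knots over all degrees $\ge 3$'' is a strictly stronger theorem than either bullet; it is precisely the main content of \cite{BFM}, obtained there by analysing which finite groups can simultaneously contain several non-conjugate ``knot rotations'' (cyclic subgroups with connected fixed-point set and quotient $\S^3$), and this is exactly where the classification of finite simple groups enters, as the paper remarks. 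So, as written, your proposal establishes the bound $25$, not $15$: the key quantitative input is asserted rather than derived. (For comparison, the paper itself offers no argument either --- the corollary is quoted directly from \cite{BFM} --- so nothing in the notes can be leveraged to close this gap.)
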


The proof of this result relies on the classification of finite simple groups.

\subsection{Two characterisations}

The above result can be restated to provide a characterisation of the
$3$-sphere.

\begin{Corollary}
The following are equivalent:
\begin{itemize}
\item $M$ is the $3$-sphere;
\item $M$ is the cyclic branched covering of some knot in $\S^3$ for at least 
seven odd prime degrees;
\item $M$ admits a finite group of orientation-preserving diffeomorphisms
which contains at least sixteen conjugacy classes of elements with connected
fixed-point set and space of orbits homeomorphic to $\S^3$.
\end{itemize}
\end{Corollary}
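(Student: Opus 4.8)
The plan is to close the cycle $(1)\Rightarrow(2)\Rightarrow(1)$ and $(1)\Rightarrow(3)\Rightarrow(1)$, where the two implications out of $(1)$ are explicit constructions and the two returns rest on the theorem of Boileau--Franchi--Mecchia--P.--Zimmermann. For $(1)\Rightarrow(2)$, if $M\cong\S^3$ then $M=M(U,n)$ for the trivial knot $U$ and every $n\ge 2$ (the Example), so $M$ is a cyclic branched covering for all, hence for at least seven, odd prime degrees. For $(1)\Rightarrow(3)$ I would let $G\cong\ZZ/17\ZZ$ act on $\S^3$ as the rotations about a fixed unknotted circle: each of the $16$ non-trivial elements $g$ has $\mathrm{Fix}(g)\cong\S^1$ and $\S^3/\langle g\rangle\cong\S^3$ (the Example again), and since $G$ is abelian these $16$ elements fall into $16$ distinct conjugacy classes. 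Finally $(2)\Rightarrow(1)$ is the contrapositive of the second bullet of the main theorem: a manifold $M\not\cong\S^3$ is a cyclic branched covering for at most six odd prime degrees, so seven such degrees force $M\cong\S^3$.

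The substance is $(3)\Rightarrow(1)$. Assume $G$ is a finite orientation-preserving group on $M$ with at least sixteen conjugacy classes of elements $g$ satisfying $\mathrm{Fix}(g)\cong\S^1$ and $M/\langle g\rangle\cong\S^3$, and suppose for contradiction that $M\not\cong\S^3$. Each such $g$, of order $d_g$, presents $M$ as the cyclic branched covering $M(K_g,d_g)$ of the knot $K_g=p_g(\mathrm{Fix}(g))$ via the Proposition (one checks, using the positive solution of Smith's conjecture, that the remaining powers of $g$ also have circle fixed-point sets, so the hypotheses of the Proposition are met); moreover $K_g$ is non-trivial by Theorem~\ref{ThmSC}, precisely because $M\not\cong\S^3$. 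I would then organise a count around two facts. First, conjugate good elements yield equivalent knots: if $g'=hgh^{-1}$, then $h$ descends to a self-homeomorphism of the quotient $\S^3$ sending $K_g$ to $K_{g'}$. Second, by Lemma~\ref{lCK}, the reduction to prime knots, and Remark~\ref{rkGeo}, one may split according to the geometry of $M$.

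The aim is to bound, in each geometric regime, the number of conjugacy classes of good elements by the number of distinct branched-covering presentations $(K_g,d_g)$ of $M$, and then to invoke the main theorem. In the hyperbolic case the covering transformations may be taken as isometries of the finite group $\mathrm{Isom}(M)$, and the preceding corollary bounds the number of non-equivalent knots $K_g$ by fifteen; in the Seifert fibred, toroidal, or reducible cases I would instead route the bound through the at most six odd prime degrees supplied by the main theorem. Since a bound of at most fifteen contradicts the assumed sixteen conjugacy classes, one concludes $M\cong\S^3$.

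The hard part will be the passage from conjugacy classes of \emph{elements} to branched-covering \emph{presentations}. The pair $(K_g,d_g)$ depends only on the subgroup $\langle g\rangle$, and within $\langle g\rangle$ exactly the $\varphi(d_g)$ generators are good: for a non-generator $g^k$ one computes $M/\langle g^k\rangle\cong M(K_g,\gcd(k,d_g))$, which by Theorem~\ref{ThmSC} equals $\S^3$ only when $\gcd(k,d_g)=1$. These generators may in turn split into several $G$-conjugacy classes, according to the image of $N_G(\langle g\rangle)$ in $\mathrm{Aut}(\langle g\rangle)$, so controlling this multiplicity is the delicate step. In the hyperbolic case Mostow rigidity makes $\langle g\rangle$ unique up to conjugacy once $(K_g,d_g)$ is fixed, which is what lets the fifteen-knot bound be upgraded to a bound on conjugacy classes; this, and the corresponding analysis in the non-hyperbolic cases, is exactly where the finite-group arguments of Boileau--Franchi--Mecchia--P.--Zimmermann, and ultimately the classification of finite simple groups, enter, and is the step I expect to require the most care.
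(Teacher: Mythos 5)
Your three easy implications are correct and are exactly what the paper has in mind: the corollary is offered there with no separate proof, as a restatement of the Boileau--Franchi--Mecchia--Paoluzzi--Zimmermann theorem and its corollary (seven $=$ six $+$ one, sixteen $=$ fifteen $+$ one), with the trivial knot supplying the constructions out of $(1)$ and the contrapositive of the six-odd-prime-degrees bound giving $(2)\Rightarrow(1)$.

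The genuine gap is in $(3)\Rightarrow(1)$, and it is not of the kind that more care would fix: the step you flag as ``delicate'' is impossible as you have set it up. You want to bound the number of conjugacy classes of good \emph{elements} by the number of presentations $(K_g,d_g)$, i.e. by the number of knots, and then quote the fifteen-knot corollary. But that map on conjugacy classes is hopelessly non-injective: all $\varphi(d)$ generators of $\langle g\rangle$ yield the same pair $(K_g,d_g)$, and two generators $g^a,g^b$ can only be conjugate via an element of $N_G(\langle g\rangle)$; any such element descends to an orientation-preserving symmetry of $(\S^3,K_g)$, which acts on the meridional generator of $H_1(\S^3\setminus K_g)$, hence on the deck group $\langle g\rangle$, only by $\pm 1$. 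So the image of $N_G(\langle g\rangle)$ in $\mathrm{Aut}(\langle g\rangle)$ lies in $\{\pm 1\}$, and a single good subgroup of order $d$ already produces at least $\varphi(d)/2$ element classes. Mostow rigidity, which you invoke, makes the \emph{subgroup} unique up to conjugacy; it does nothing about this splitting, which is what you need. Concretely: take a nontrivial knot $K$, a prime $p\ge 17$, $M=M(K,p)\not\cong\S^3$, and $G=\ZZ/p\ZZ$ the deck group. Then $G$ is abelian, and its $p-1\ge 16$ nontrivial elements form sixteen conjugacy classes of elements with connected fixed-point set and quotient $\S^3$. Thus, with the literal ``conjugacy classes of elements'' reading that you adopt, the implication $(3)\Rightarrow(1)$ is false, and no counting argument can close it.

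The way out is that the statement must be read (as in the cited BFM paper, which is what the notes are restating) in terms of conjugacy classes of \emph{cyclic subgroups} generated by rotations with trivial quotient --- equivalently, inequivalent branched-covering presentations --- and that bound is not a formal consequence of the fifteen-knot corollary stated in the notes (which moreover concerns only hyperbolic $M$); it is the group-theoretic, CFSG-based content of the BFM paper itself, so deferring to it wholesale is the honest proof. Two further consequences for your write-up: under this corrected reading your $(1)\Rightarrow(3)$ also breaks, since $\ZZ/17\ZZ$ has a single nontrivial cyclic subgroup and hence gives one class, not sixteen (use instead rotations about an unknotted circle whose order has at least seventeen divisors); and your fallback in the non-hyperbolic cases --- ``route the bound through the six odd prime degrees'' --- cannot work, because good elements need not have odd prime order, the deck involutions of Montesinos knots being the standard example.
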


In a similar spirit one can prove that an integral homology sphere is $\S^3$ if
and only if it is the cyclic cover branched over some knot in $\S^3$ for at  
least four odd prime degrees \cite{BPZ}. In this situation the result is best 
possible as the Brieskorn sphere with three exceptional fibres of orders 
$p_1>p_2>p_3\ge 3$, three prime integers, is the $p_i$-fold cyclic branched 
covering of the $T(p_j,p_k)$-torus knot for $\{i,j,k\}=\{1,2,3\}$. The question 
to know whether the bound six in the previous general case is optimal remains
open.

As we have seen, sometimes the existence of an $n$-twin for a knot induces an
$n$-period with trivial quotient. As a by-product of this we have the following
result \cite{BP} whose proof is elementary when the periods commute.

\begin{Theorem}[Boileau-P.]
A knot is trivial if and only if it admits three periods with trivial quotients
and pairwise distinct orders $>2$.
\end{Theorem}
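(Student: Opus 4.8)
The plan is to prove both implications, the substance being the estimate that a \emph{nontrivial} knot can carry at most two periods of pairwise distinct orders $>2$ with trivial quotient.

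For the forward implication I would realise the unknot concretely. Writing $\S^3=\{(z,w)\in\CC^2:|z|^2+|w|^2=1\}$, let $U=\{z=0\}$ and $A=\{w=0\}$ be the two components of the Hopf link. For each $n\ge 2$ the rotation $\phi_n(z,w)=(z,e^{2\pi i/n}w)$ is an orientation-preserving diffeomorphism of order $n$ fixing exactly $A$, preserving the unknot $U$, with $\S^3/\langle\phi_n\rangle\cong\S^3$ carrying $U$ to an unknot; hence $\phi_n$ is a period of order $n$ of $U$ with trivial quotient, and $n=3,4,5$ gives three periods of pairwise distinct orders $>2$. For the converse I would first record what a single such period forces: if $\phi$ is a period of order $n$ with trivial quotient, then by the positive solution of Smith's conjecture \cite{MB} $\phi$ is conjugate to a standard rotation, so the axis $A=\mathrm{Fix}(\phi)$ is unknotted, every non-trivial power of $\phi$ has the same fixed-point set $A$, the action of $\langle\phi\rangle$ on $\S^3\setminus A\supset K$ is free, and $\S^3/\langle\phi\rangle\cong\S^3$. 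Thus $K$ is a classical periodic knot about the unknotted axis $A$ whose quotient $\bar K$ is, by hypothesis, the unknot.

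Next I would split according to the geometry of $K$ (Remark~\ref{rkGeo}). If $K$ is a nontrivial torus knot $T(p,q)$, the only periods of order $>2$ with trivial quotient are the two rotations about the cores of its Heegaard torus, of orders $p$ and $q$; a rotation of smaller order about either core has a nontrivial torus knot as quotient. Hence such a $T(p,q)$ carries at most two periods of pairwise distinct orders $>2$ with trivial quotient, contradicting the hypothesis. If $K$ is hyperbolic, the three periods are realised inside the finite group $G=\mathrm{Isom}^+(\S^3\setminus K)$, each $\phi_i$ being rotation-like with axis a closed geodesic $A_i$. The satellite case I would reduce to these two by making the JSJ decomposition invariant (equivariant torus theorem) and pushing the periods onto the geometric pieces.

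The elementary heart is the commuting case. If two of the periods, say $\phi_i,\phi_j$ of distinct orders, commute, then $\phi_i$ preserves $A_j=\mathrm{Fix}(\phi_j)$ and vice versa, and conjugating the finite group they generate into $SO(4)$ (geometrization of finite group actions on $\S^3$) shows that two commuting finite-order rotations of $\S^3$ with circular fixed sets either share their axis or have axes forming a Hopf link; in the latter case they are the standard rotations of a $\ZZ/n_i\times\ZZ/n_j$-action about the Hopf circles $A_i,A_j$. Then $K$ lies in $\S^3\setminus(A_i\cup A_j)\cong T^2\times(0,1)$, invariant under the induced finite group of rotations of the $T^2$-factor, and the two trivial-quotient conditions force $K$ to be isotopic to a curve on a Heegaard torus, i.e.\ a torus knot or the unknot; combined with the torus-knot bound, only the unknot survives.

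The main obstacle is precisely the reduction to a commuting pair in the hyperbolic (and satellite) case, which the Hopf-model argument above does not see: one must combine the finiteness of $G$ with the fact that each $\phi_i$ is rotation-like (connected circular fixed set, shared by all powers) and with the distinctness of the three orders $>2$, via elementary finite-group theory (Sylow subgroups) and the geometry of the fixed geodesics, to produce suitable conjugates of two periods that commute while retaining the trivial-quotient property. This is the step that is not elementary, and it is exactly where the hypotheses are used: the distinctness of the three orders and the triviality of the quotients are what prevent the three rotation axes from being geometrically independent and ultimately collapse $K$ onto a standardly embedded torus, hence onto the unknot.
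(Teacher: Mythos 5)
The decisive step of your argument --- what you call the ``elementary heart'' --- is false as stated. You claim that if two of the periods commute and their axes form a Hopf link, then the two trivial-quotient conditions force $K$ onto a Heegaard torus, so that $K$ is a torus knot or the unknot, and the third period enters only through the count of trivial-quotient periods of a torus knot. Zimmermann's construction \cite{Z1}, recalled at the end of Section 5 of these notes, contradicts this. Start from a three-component link $L_1\cup L_2\cup L_3$ in which every component is unknotted and every two-component sublink is a Hopf link, take $n>m\ge 3$ coprime, and let $K$ be the preimage of $L_3$ in the $\ZZ/n\ZZ\times\ZZ/m\ZZ$ covering branched over $L_1\cup L_2$. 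The total space is again $\S^3$; the two deck generators are commuting periods of $K$ of orders $n$ and $m$ whose axes (the preimages of $L_1$ and $L_2$) form a Hopf link; and both quotients are trivial, because the quotient of $K$ by the $\ZZ/n\ZZ$-factor is the lift of $L_3$ in the $m$-fold cover branched over $L_2$, an unknot since $L_2\cup L_3$ is a Hopf link (and symmetrically for the other factor). For suitable $L$ these knots are hyperbolic --- this is exactly how \cite{Z1} produces hyperbolic knots that are simultaneously $m$- and $n$-twins --- hence neither trivial nor torus knots. So two commuting trivial-quotient periods of distinct orders $>2$ do \emph{not} collapse $K$ onto a torus; this is precisely why the theorem demands three periods, and any correct commuting-case argument must use all three at once. (Note also that after conjugating the abelian group generated by three such periods into $SO(4)$, each period lies in one of the two $SO(2)$-factors, so by pigeonhole two of them \emph{share} an axis: the shared-axis configuration, which your sketch mentions and then silently drops, is the unavoidable one, not the Hopf one.)

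The two remaining cases are not proved either. For satellite knots, ``making the JSJ decomposition invariant'' is not available for the three periods simultaneously: equivariant JSJ applies to a finite group action, but the group generated by three finite-order diffeomorphisms of the pair $(\S^3,K)$ is not a priori finite (mapping class groups of satellite knot pairs can be infinite, because of twists along JSJ tori), and this toroidal case is where much of the actual work in \cite{BP} lies. In the hyperbolic case you defer the commuting reduction to Sylow theory; but the Sylow argument of \cite{Z2} and \cite{BP} compares two deck transformations of the \emph{same} order acting on a branched cover $M(K,n)$, and does not apply to three periods of pairwise distinct orders acting on $(\S^3,K)$. What actually makes the hyperbolic case work is simpler: the orientation-preserving symmetry group of a hyperbolic knot acts faithfully on the cusp torus, symmetries of order $>2$ act there as translations, so once the three periods are realised as isometries (orbifold theorem plus Gordon--Luecke) they commute pairwise. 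Finally, be aware that the text you were given contains no proof: the theorem is quoted from \cite{BP} with the single remark that the argument is elementary when the periods commute --- so that commuting lemma is precisely the burden of any proof, and the one you propose is false. Only your forward implication and your count of the trivial-quotient periods of a torus knot are correct as written.
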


\subsection*{Acknowledgement}

The author would like to thank M. Kegel and M. Silvero for the invitation to 
speak and careful organisation, as well as all participants to the workshop for 
contributing to a stimulating research environment and friendly atmosphere.

\footnotesize

\textsc{Aix-Marseille Univ, CNRS, I2M UMR 7373, Marseille, France}

\texttt{luisa.paoluzzi@univ-amu.fr}


\begin{thebibliography}{XXXXX}

\bibitem[BFM]{BFM} M. Boileau, C. Franchi, M. Mecchia, L. Paoluzzi, B.
Zimmermann, \emph{Finite group actions and cyclic branched covers of knots in 
${\bf S}^3$},  J. Topol. \textbf{11}, (2018), 283-308.

\bibitem[BLP]{BLP} M. Boileau, B. Leeb and J. Porti,
\emph{Geometrization of $3$-dimensional orbifolds},
Annals of Math. \textbf{162},  (2005), 195-290.

\bibitem[BP]{BP} M. Boileau and L. Paoluzzi, \emph{On cyclic branched
coverings of prime knots}, J. Topol. \textbf{1}, (2008), 557-583.

\bibitem[BPZ]{BPZ} M. Boileau, L. Paoluzzi, B. Zimmermann, \emph{A 
characterisation of $\S^3$ among homology spheres}, The Zieschang 
Gedenkschrift. Geom. Topol. Monogr. \textbf{14}, Geom. Topol. Publ., Coventry, 
(2008), 83-103.

\bibitem[G]{G1} J. E. Greene,
\emph{Lattices, graphs, and Conway mutation},
Invent. Math. \textbf{192}, (2013), 717-750.

\bibitem[HLM]{HLM} H. M. Hilden, M. T. Lozano, J. M. Montesinos,
\emph{On knots that are universal}, Topology \textbf{24}, (1985), 499-504.

\bibitem[HR]{HR} C. Hodgson, J. H. Rubinstein, \emph{Involutions and isotopies
of lens spaces}, In: Knot theory and manifolds (Vancouver, 1983). Ed. D.
Rolfsen. Lecture Notes in Math. \textbf{1144}, Springer-Verlag, Berlin, (1985),
60-96.

\bibitem[JP]{JP} Y. Jang, L. Paoluzzi, \emph{Cyclic branched covers of tunnel 
number one knots},  Geom. Dedicata \textbf{211}, (2021), 129-143.

\bibitem[Ka]{Ka} A. Kawauchi, \emph{Topological imitations and
Reni-Mecchia-Zimmermann’s conjecture}, Kyungpook Math. J. \textbf{46}, (2006), 
1-9.

\bibitem[Ko]{Ko} S. Kojima, \emph{Determining knots by branched covers}, In: 
Low dimensional topology and Kleinian groups (Warwick and Durham, 1984). Ed. 
D. E. A. Epstein. London Math. Soc. Lecture Notes \textbf{112}, Cambridge Univ. 
Press, Cambridge, (1986), 193-207.

\bibitem[MR]{MR}  M. Mecchia, M. Reni, \emph{Hyperbolic 2-fold branched 
coverings of links and their quotients}, Pacific J. Math. \textbf{202}, (2002), 
429-447.

\bibitem[M]{Mo} J. M. Montesinos,
\emph{Variedades de Seifert que son recubridores ciclicos ramificados de dos
hojas}, Bol. Soc. Mat. Mexicana \textbf{18}, (1973), 1-32.

\bibitem[MW]{MW} J. M. Montesinos, W. Whitten, \emph{Construction of two-fold
branched covering spaces}, Pacific J. Math. \textbf{125}, (1986), 415-446.

\bibitem[MB]{MB} J. Morgan, H. Bass, \emph{The Smith conjecture}, 
Academic Press, New York, 1984.

\bibitem[N]{N} Y. Nakanishi, \emph{Primeness of links}, Math. Sem. Notes
Kobe Univ. \textbf{9}, (1981), 415-440.

\bibitem[P1]{P1} L. Paoluzzi, \emph{On hyperbolic type involutions}, Rend. Ist.
Mat. Univ. Trieste \textbf{32}, suppl. 1, (2001), 221-256.

\bibitem[P2]{P} L. Paoluzzi, \emph{Non-equivalent hyperbolic knots}, Topology
Appl. \textbf{124}, (2002), 85-101.

\bibitem[P3]{P2} L. Paoluzzi, \emph{Three cyclic branched covers suffice to
determine hyperbolic knots}, J. Knot Theory Ramifications. \textbf{14}, (2005), 
641-655.

\bibitem[P4]{Pa} L. Paoluzzi, \emph{Conway irreducible hyperbolic knots with 
two common covers}, Comment. Math. Helv. \textbf{80}, (2005), 643–653.

\bibitem[P5]{P3} L. Paoluzzi, \emph{Hyperbolic knots and cyclic branched
covers}, Publ. Mat. \textbf{49}, (2005), 257-284.

\bibitem[P6]{P4} L. Paoluzzi, \emph{Cyclic branched covers of alternating 
knots}, Annales Henri Lebesgues \textbf{4}, (2021), 811-830.

\bibitem[Re]{Re2}  M. Reni, \emph{On $\pi$-hyperbolic knots with the same 
$2$-fold branched coverings}, Math. Ann. \textbf{316}, (2000), 681-697.

\bibitem[RZ]{RZ1} M. Reni, B. Zimmermann, \emph{Isometry groups of hyperbolic
3-manifolds which are cyclic branched coverings}, Geom. Dedicata \textbf{74},
(1999), 23-35.

\bibitem[Ro]{R} D. Rolfsen,
\emph{Knots and links},
Publish or Perish, Berkeley, 1976. 

\bibitem[S]{Sa1} M. Sakuma, \emph{Periods of composite links}, Math. Sem. 
Notes Kobe Univ. \textbf{9}, (1981), 445-452.

\bibitem[V1]{V} O. J. Viro,
\emph{Links, two-sheeted branching coverings and braids},
Mat. Sb. \textbf{87}, (1972), 216-228.

\bibitem[V2]{V1}  O. Ja. Viro
\emph{Non-projecting isotopies and knots with homeomorphic coverings},
Zap. Nauchn. Semin. LOMI  \textbf{66}, (1976), 133-147.
English transl. in J. Soviet Math. \textbf{12}, (1979), 86-96.

\bibitem[Z1]{Z1}  B. Zimmermann, \emph{On hyperbolic knots with the same 
$m$-fold and $n$-fold cyclic branched coverings.} Topology Appl. \textbf{79}, 
(1997), 143-157.

\bibitem[Z2]{Z2} B. Zimmermann, \emph{On hyperbolic knots with homeomorphic 
cyclic branched coverings.} Math. Ann. \textbf{311}, (1998), 665-673.


\end{thebibliography}
\end{document}